\def\Corresponding author{$^{*}$\protect\footnotetext{$^{*}$ C\lowercase{orresponding author.}}}
\def\authorsaddresses#1{\dedicatory{#1}}
\newtheorem{thm}{Theorem}[section]
\theoremstyle {definition}
\newtheorem{cor}[thm]{Corollary}
\newtheorem{prop}[thm]{Proposition}
\newtheorem{lem}[thm]{Lemma}
\newtheorem{rem}[thm]{Remark}
\numberwithin{equation}{section}
\begin{document}
\setcounter{page}{1}

\title[Domination number in the annihilating-submodule graph]{Domination number in the annihilating-submodule graph
 of modules over commutative rings}

\author[H. Ansari-Toroghy and S. Habibi]{H. Ansari-Toroghy$^1$ and S. Habibi$^2$}

\authorsaddresses{$^1$ Department of pure Mathematics,\\ Faculty of mathematical Sciences,\\ University of Guilan,
P. O. Box 41335-19141, Rasht, Iran.\\
e-mail: ansari@guilan.ac.ir\\
\vspace{0.5cm} $^2$ School of Mathematics, Institute for Research
in Fundamental Sciences (IPM), P.O. Box: 19395-5746, Tehran,
Iran.\\ Department of pure Mathematics, Faculty of mathematical
Sciences, University of Guilan, P. O. Box 41335-19141, Rasht,
Iran.
\\
e-mail: habibishk@gmail.com} \subjclass[2010]{13C13, 13C99, 05C75}
\keywords{Commutative rings, annihilating-submodule graph,
domination number.\\This research was in part supported by a grant
from IPM (No. 96130028)}
\begin{abstract}
Let $M$ be a module over a commutative ring $R$. The
annihilating-submodule graph of $M$, denoted by $AG(M)$, is a
simple graph in which a non-zero submodule $N$ of $M$ is a vertex
if and only if there exists a non-zero proper submodule $K$ of $M$
such that $NK=(0)$, where $NK$, the product of $N$ and $K$, is
denoted by $(N:M)(K:M)M$ and two distinct vertices $N$ and $K$ are
adjacent if and only if $NK=(0)$. This graph is a submodule
version of the annihilating-ideal graph and under some conditions,
is isomorphic with an induced subgraph of the Zariski
topology-graph $G(\tau_T)$ which was introduced in (The Zariski
topology-graph of modules over commutative rings, Comm. Algebra.,
42 (2014), 3283--3296). In this paper, we study the domination
number of $AG(M)$ and some connections between the graph-theoretic
properties of $AG(M)$ and algebraic properties of module $M$.
\end{abstract}
\maketitle
\section{Introduction}
 Throughout this paper $R$ is a commutative ring with a non-zero
identity and $M$ is a unital $R$-module. By $N\leq M$ (resp. $N<
M$) we mean that $N$ is a submodule (resp. proper submodule) of
$M$.

Define $(N:_{R}M)$ or simply $(N:M)=\{r\in R|$ $rM\subseteq N\}$
for any $N\leq M$. We denote $((0):M)$ by $Ann_{R}(M)$ or simply
$Ann(M)$. $M$ is said to be faithful if $Ann(M)=(0)$. Let $N,
K\leq M$. Then the product of $N$ and $K$, denoted by $NK$, is
defined by $(N:M)(K:M)M$ (see \cite{af07}). Define $ann(N)$ or
simply $annN=\{m\in M|$ $m(K:M)=0\}$.

The prime spectrum of $M$ is the set of all prime submodules of
$M$ and denoted by $Spec(M)$, $Max(M)$ is the set of all maximal
submodules of $M$, and $J(M)$, the jacobson radical of $M$, is the
intersection of all elements of $Max(M)$, respectively.

There are many papers on assigning graphs to rings or modules
(see, for example, \cite{al99, ah14, b88, br11}). The
annihilating-ideal graph $AG(R)$ was introduced and studied in
\cite{br11}. $AG(R)$ is a graph whose vertices are ideals of $R$
with nonzero annihilators and in which two vertices $I$ and $J$
are adjacent if and only if $IJ=(0)$. Later, it was modified and
further studied by many authors (see \cite{aa12, aa13, aa14, nmk,
ts}).

In \cite{ah14}, the present authors introduced and studied the
graph $G(\tau_T)$ (resp. $AG(M)$), called the \textit{Zariski
topology-graph} (resp. \textit {the annihilating-submodule
graph}), where $T$ is a non-empty subset of $Spec(M)$.

$AG(M)$ is an undirected graph with vertices $V(AG(M))$= $\{N \leq
M |$ there exists $(0)\neq K<M$ with $NK=(0)$\}. In this graph,
distinct vertices $N,L \in V(AG(M))$ are adjacent if and only if
$NL=(0)$ (see \cite{ah16, ah160}). Let $AG(M)^{*}$ be the subgraph
of $AG(M)$ with vertices $V(AG(M)^{*})=\{ N<M$ with $(N:M)\neq
Ann(M)|$ there exists a submodule $K<M$ with $(K:M)\neq Ann(M)$
and $NK=(0)\}$. By \cite[Theorem 3.4]{ah14}, one conclude that
$AG(M)^{*}$ is a connected subgraph. Note that $M$ is a vertex of
$AG(M)$ if and only if there exists a nonzero proper submodule $N$
of $M$ with $(N:M)=Ann(M)$ if and only if every nonzero submodule
of $M$ is a vertex of $AG(M)$. Clearly, if $M$ is not a vertex of
$AG(M)$, then $AG(M)=AG(M)^{*}$. In \cite[Lemma 2.8]{ah140}, we
showed that under some conditions, $AG(M)$ is isomorphic with an
induced subgraph of the Zariski topology-graph $G(\tau_T)$.

In this paper, we study the domination number of $AG(M)$ and some
connections between the graph-theoretic properties of $AG(M)$ and
algebraic properties of module $M$.

A prime submodule of $M$ is a submodule $P\neq M$ such that
whenever $re\in P$ for some
  $r\in R$ and $e \in M$, we have $r\in (P:M)$ or $e\in P$ \cite{lu84}.

The notations $Z(R)$ and $Nil(R)$ will denote the set of all
zero-divisors, the set of all nilpotent elements of $R$,
respectively. Also, $Z_{R}(M)$ or simply $Z(M)$, the set of zero
divisors on $M$, is the set $\{r\in R|$ $rm=0$ for some $0\neq
m\in M \}$. If $Z(M)=0$, then we say that $M$ is a domain. An
ideal $I\leq R$ is said to be nil if $I$ consist of nilpotent
elements.

Let us introduce some graphical notions and denotations that are
used in what follows: A graph $G$ is an ordered triple $(V(G),
E(G), \psi_G )$ consisting of a nonempty set of vertices,
 $V(G)$, a set $E(G)$ of edges, and an incident function $\psi_G$ that associates an
 unordered pair of distinct vertices with each edge. The edge $e$ joins $x$
  and $y$ if $\psi_G(e)=\{x, y\}$, and we
 say $x$ and $y$ are adjacent. The number of edges incident at $x$
 in $G$ is called the degree of the vertex $x$ in $G$ and is
 denoted by $d_G(v)$ or simply $d(v)$.
 A path in graph $G$ is a finite sequence of vertices $\{x_0,
x_1,\ldots ,x_n\}$, where $x_{i-1}$ and $x_i$ are adjacent for
each $1\leq i\leq n$ and we denote $x_{i-1} - x_i$ for existing an
edge between $x_{i-1}$ and $x_i$. The distance between two
vertices $x$ and $y$, denoted $d(x, y)$, is the length of the
shortest path from $x$ to $y$. The diameter of a connected graph
$G$ is the maximum distance between two distinct vertices of $G$.
For any vertex $x$ of a connected graph $G$, the eccentricity of
$x$, denoted $e(x)$, is the maximum of the distances from $x$ to
the other vertices of $G$. The set of vertices with minimum
eccentricity is called the center of the graph $G$, and this
minimum eccentricity value is the radius of $G$. For some
$U\subseteq V(G)$, we denote by $N(U)$, the set of all vertices of
$G\setminus U$ adjacent to at least one vertex of $U$ and
$N[U]=N(U)\cup \{U\}$.

 A graph $H$ is a subgraph of $G$, if $V(H)\subseteq V(G)$,
$E(H)\subseteq E(G)$, and $\psi_H$ is the restriction of $\psi_G$
to $E(H)$. A subgraph $H$ of $G$ is a spanning subgraph of $G$ if
$V(H)=V(G)$. A spanning subgraph $H$ of $G$ is called a perfect
matching of $G$ if every vertex of $G$ has degree 1.

 A clique of a graph is a complete subgraph and the supremum of the sizes of
cliques in $G$, denoted by $cl(G)$, is called the clique number of
$G$. Let $\chi(G)$ denote the chromatic number of the graph $G$,
that is, the minimal number of colors needed to color the vertices
of $G$ so that no two adjacent vertices have the same color.
Obviously $\chi(G)\geq cl(G)$.

 A subset $D$ of $V(G)$ is called a
dominating set if every vertex of $G$ is either in $D$ or adjacent
to at least one vertex in $D$. The domination number of $G$,
denoted by $\gamma(G)$, is the number of vertices in a smallest
dominating set of $G$. A total dominating set of a graph $G$ is a
set $S$ of vertices of $G$ such that every vertex is adjacent to a
vertex in $S$. The total domination number of $G$, denoted by
$\gamma_t(G)$, is the minimum cardinality of a total dominating
set. A dominating set of cardinality $\gamma(G)$ ($\gamma_t(G)$)
is called a $\gamma$-set ($\gamma_t$-set). A dominating set $D$ is
a connected dominating set if the subgraph $<D>$ induced by $D$ is
a connected subgraph of $G$. The connected domination number of
$G$, denoted by $\gamma_c(G)$, is the minimum cardinality of a
connected dominating set of $G$. A dominating set $D$ is a clique
dominating set if the subgraph $<D>$ induced by $D$ is complete in
$G$. The clique domination number $\gamma_{cl}(G)$ of $G$ equals
the minimum cardinality of a clique dominating set of $G$. A
dominating set $D$ is a paired-dominating set if the subgraph
 $<D>$ induced by $D$ has a perfect matching. The paired-domination number
$\gamma_{pr}(G)$ of $G$ equals the minimum cardinality of a
paired-dominating set of $G$.

 A vertex $u$ is a neighbor of $v$ in $G$, if $uv$ is an edge of
 $G$, and $u\neq v$. The set of all neighbors of $v$ is the open
 neighborhood of $v$ or the neighbor set of $v$, and is denoted by
 $N(v)$; the set $N[v]=N(v)\cup \{v\}$ is the closed neighborhood
 of $v$ in $G$.

  Let $S$ be a dominating set of a graph $G$, and $u\in S$. The
  private neighborhood of $u$ relative to $S$ in $G$ is the set of
  vertices which are in the closed neighborhood of $u$, but not in
  the closed neighborhood of any vertex in $S\setminus
  \{u\}$. Thus the private neighborhood $P_N(u, S)$ of $u$ with
  respect to $S$ is given by $P_N(u, S)=N[u]\setminus (\cup_{v\in S\setminus \{u\}}
  N[v])$. A set $S\subseteq V(G)$ is called irredundant if every
  vertex $v$ of $S$ has at least one private neighbor. An irredundant set $S$ is a
   maximal irredundant set if for every
vertex $u \in V\setminus S$, the set $S\cup \{u\}$ is not
irredundant. The irredundance number $ir(G)$ is the minimum
cardinality of maximal irredundant sets. There are so many
domination parameters in the literature and for more details one
can refer \cite{hhs}.

 A bipartite graph is a graph whose vertices can be
divided into two disjoint sets $U$ and $V$ such that every edge
connects a vertex in $U$ to one in $V$; that is, $U$ and $V$ are
each independent sets and complete bipartite graph on $n$ and $m$
vertices, denoted by $K_{n, m}$, where $V$ and $U$ are of size $n$
and $m$, respectively, and $E(G)$ connects every vertex in $V$
with all vertices in $U$. Note that a graph $K_{1, m}$ is called a
star graph and the vertex in the singleton partition is called the
center of the graph. We denote by $P_{n}$ a path of order $n$ (see
\cite{r05}).

In section 2, a dominating set of $AG(M)$ is constructed using
elements of the center when $M$ is an Artinian module. Also we
prove that the domination number of $AG(M)$ is equal to the number
of factors in the Artinian decomposition of $M$ and we also find
several domination parameters of $AG(M)$. In section 3, we study
the domination number of the annihilating-submodule graphs for
reduced rings with finitely many minimal primes and faithful
modules. Also, some relations between the domination numbers and
the total domination numbers of annihilating-submodule graphs are
studied.

 The following results are useful for further reference in this
paper.

\begin{prop}\label{p1.1} Suppose that $e$ is an idempotent element of
$R$. We have the following statements.

\begin {itemize}
\item [(a)] $R=R_{1}\times R_{2}$, where $R_{1}=eR$ and
$R_{2}=(1-e)R$. \item [(b)] $M=M_{1}\times M_{2}$, where
$M_{1}=eM$ and $M_{2}=(1-e)M$. \item [(c)] For every submodule $N$
of $M$, $N=N_{1}\times N_{2}$ such that $N_{1}$ is an
$R_{1}$-submodule $M_{1}$, $N_{2}$ is an $R_{2}$-submodule
$M_{2}$, and $(N:_{R}M)=(N_{1}:_{R_{1}}M_{1})\times
(N_{2}:_{R_{2}}M_{2})$. \item [(d)] For submodules $N$ and $K$ of
$M$, $NK=N_{1}K_{1} \times N_{2}K_{2}$ such that $N=N_{1}\times
N_{2}$ and $K=K_{1}\times K_{2}$. \item [(e)]  Prime submodules of
$M$ are $P\times M_{2}$ and $M_{1}\times Q$, where $P$ and $Q$ are
prime submodules of $M_{1}$ and $M_{2}$, respectively.
\end{itemize}

\end{prop}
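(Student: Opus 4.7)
The plan is to exploit the orthogonal decomposition $1 = e + (1-e)$ throughout, letting $e$ and $1-e$ act as the component projectors. Since $e(1-e)=0$, an element or submodule is annihilated by $e$ precisely when it lies in the $(1-e)$-component, and vice versa. This gives natural candidates for the direct summands, and each item reduces to a short verification.

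For \textbf{(a)}, I would show $R = eR \oplus (1-e)R$ as an internal direct sum: every $r \in R$ equals $er + (1-e)r$, and if $er = (1-e)s$ then multiplying by $e$ kills the right side and fixes the left, forcing $er = 0$. The identity of $R_1 = eR$ is $e$. For \textbf{(b)}, the same argument works with $m \in M$ replacing $r \in R$, giving $M = eM \oplus (1-e)M$; the action of $R = R_1 \times R_2$ on $M_1 \times M_2$ is the natural componentwise one because $e$ acts trivially on $M_1$ and as zero on $M_2$.

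For \textbf{(c)}, given a submodule $N \leq M$ I would set $N_1 = eN$ and $N_2 = (1-e)N$. Writing $n = en + (1-e)n$ shows $N = N_1 + N_2$, and the direct-sum property is inherited from (b). Each $N_i$ is visibly stable under $R_i = e_i R$, hence is an $R_i$-submodule of $M_i$. For the colon formula, I would verify the two inclusions separately: if $r = (r_1, r_2)$ satisfies $rM \subseteq N$, then projecting via $e$ and $1-e$ gives $r_i M_i \subseteq N_i$; conversely componentwise containment assembles back because the decomposition is internal. This yields $(N:_R M) = (N_1 :_{R_1} M_1) \times (N_2 :_{R_2} M_2)$. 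Part \textbf{(d)} is then immediate from the definition $NK = (N:M)(K:M)M$ together with (c) and the componentwise product of ideals in $R_1 \times R_2$ acting on $M_1 \times M_2$.

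For \textbf{(e)}, the main obstacle, I would use the definition of prime submodule together with the product structure. If $P < M_1$ is prime, I need to check that $P \times M_2$ is prime in $M = M_1 \times M_2$: given $r = (r_1, r_2)$ and $m = (m_1, m_2)$ with $rm \in P \times M_2$, we have $r_1 m_1 \in P$, so either $r_1 \in (P :_{R_1} M_1)$ or $m_1 \in P$, and in either case $r \in (P \times M_2 :_R M)$ or $m \in P \times M_2$ follows (note that $(1-e) = (0, 1)$ always lies in the colon). Conversely, for a prime $Q < M$, I would consider the images under the two projections: exactly one of $eM, (1-e)M$ can fail to be contained in $Q$ (otherwise $Q = M$), and if say $(1-e)M \not\subseteq Q$, then since $e\cdot(1-e)M = 0 \subseteq Q$ primality forces $e \in (Q:M)$, hence $eM \subseteq Q$, which together with (c) applied to $Q$ itself exhibits $Q = P \times M_2$ with $P = eQ$ prime in $M_1$. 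The symmetric case gives the other family, completing the classification.
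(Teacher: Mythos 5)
Your proof is correct in substance and supplies the standard componentwise verification that the paper itself dismisses with the single line ``This is clear,'' so there is no competing argument to measure it against. The only blemish is a label swap at the end of (e): from $(1-e)M \not\subseteq Q$ you correctly deduce $eM \subseteq Q$, but $eM = M_{1}\times 0$, so this forces the \emph{first} component of $Q$ to be all of $M_{1}$ and the conclusion should read $Q = M_{1}\times Q_{2}$ with $Q_{2}=(1-e)Q$ prime in $M_{2}$ (as written, $P=eQ=M_{1}$ would give $P\times M_{2}=M$); the symmetric case then produces the $P\times M_{2}$ family.
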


\begin{proof}
This is clear.
\end{proof}

We need the following results.

\begin{lem}\label{l1.2} (See \cite[Proposition 7.6]{af74}.)
Let $R_{1}, R_{2}, \ldots , R_{n}$ be non-zero ideals of $R$. Then
the following statements are equivalent:

\begin{itemize}
\item [(a)] $R= R_{1} \times \ldots \times R_{n}$; \item [(b)] As
an abelian group $R$ is the direct sum of $ R_{1}, \ldots ,
R_{n}$; \item [(c)] There exist pairwise orthogonal idempotents
$e_{1},\ldots, e_{n}$ with $1=e_{1}+ \ldots +e_{n}$, and
$R_{i}=Re_{i}$, $i=1, \ldots ,n$.
\end{itemize}

\end{lem}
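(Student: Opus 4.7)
The plan is to prove the implications cyclically as $(a) \Rightarrow (b) \Rightarrow (c) \Rightarrow (a)$, which is the standard route for such decomposition equivalences. The implication $(a) \Rightarrow (b)$ is essentially immediate: a ring direct product is \emph{a fortiori} a direct sum of its underlying additive subgroups, and the factors of the product are precisely the given ideals after the natural identification $R_i \hookrightarrow R_1 \times \cdots \times R_n$.

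For $(b) \Rightarrow (c)$, the key move is to expand the identity. Since $R = R_1 \oplus \cdots \oplus R_n$ as an abelian group, I would write $1 = e_1 + \cdots + e_n$ with $e_i \in R_i$ uniquely determined. Multiplying through by a fixed $e_j$ and exploiting that each $R_i$ is an ideal (so $e_j e_i \in R_i$ and also $e_j e_i \in R_j$), the directness of the sum forces $e_j e_i = 0$ for $i \neq j$ and $e_j^2 = e_j$. To establish $R_i = Re_i$, the inclusion $Re_i \subseteq R_i$ is automatic since $R_i$ is an ideal; for the reverse inclusion, for any $r \in R_i$ I would write $r = r \cdot 1 = re_1 + \cdots + re_n$, and observe that $re_k \in R_i \cap R_k = 0$ for $k \neq i$, so $r = re_i \in Re_i$.

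For $(c) \Rightarrow (a)$, I would construct the ring homomorphism $\varphi : R \to Re_1 \times \cdots \times Re_n$ defined by $\varphi(r) = (re_1, \ldots, re_n)$. Additivity is trivial, and multiplicativity rests entirely on the orthogonality $e_i e_j = 0$ for $i \neq j$ together with $e_i^2 = e_i$. Injectivity follows from $r = r \cdot 1 = re_1 + \cdots + re_n$, and surjectivity from the observation that $(r_1 e_1, \ldots, r_n e_n) = \varphi(r_1 e_1 + \cdots + r_n e_n)$, again by orthogonality. The only real obstacle, mild as it is, lies in the bookkeeping of $(b) \Rightarrow (c)$: one must use that each $R_i$ is a two-sided ideal (automatic here since $R$ is commutative) in order to place the cross-products $e_i e_j$ simultaneously in $R_i$ and in $R_j$ before invoking the directness hypothesis. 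Everything else is formal.
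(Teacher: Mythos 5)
Your proof is correct. Note that the paper itself supplies no argument for this lemma --- it is quoted directly from Anderson--Fuller \cite[Proposition 7.6]{af74} --- and your cyclic argument $(a)\Rightarrow(b)\Rightarrow(c)\Rightarrow(a)$, expanding $1=e_1+\cdots+e_n$ and using that each $R_i$ is an ideal to force orthogonality and idempotency, is precisely the standard textbook proof of that result.
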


\begin{lem}\label{l1.3} (See \cite[Theorem 21.28]{l91}.)
Let $I$ be a nil ideal in $R$ and $u\in R$ be such that
 $u+I$ is an idempotent in $R/I$. Then there exists an idempotent
 $e$ in $uR$ such that $e-u\in I$.
\end{lem}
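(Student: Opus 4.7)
The plan is to construct $e$ by an explicit polynomial iteration that halves the nilpotency degree of the obstruction at each step. First set $a := u^{2} - u$; since $u+I$ is idempotent modulo $I$ we have $a \in I$, and nilness of $I$ gives $a^{n} = 0$ for some $n \geq 1$. All the iterates I construct will be polynomials in $u$ with integer coefficients, so every commutativity question is automatic and everything stays inside the subring $\mathbb{Z}[u] \subseteq R$.

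Define $u_{0} := u$ and $u_{k+1} := 3u_{k}^{2} - 2u_{k}^{3}$, and set $a_{k} := u_{k}^{2} - u_{k}$. A direct expansion, using only the relation $u_{k}^{2} = u_{k} + a_{k}$, yields the central identity
\[
a_{k+1} \;=\; a_{k}^{2}\,(4a_{k} - 3),
\]
so $a_{k+1} \in a_{k}^{2} R$ and inductively $a_{k} \in a^{2^{k}} R$. Choosing $N$ with $2^{N} \geq n$ therefore forces $a_{N} = 0$; that is, $u_{N}$ is an idempotent, and I take $e := u_{N}$.

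It remains to verify $e \in uR$ and $e - u \in I$. Both follow by short inductions once the iteration is in place. Writing $u_{k+1} = u_{k}^{2}(3 - 2u_{k})$ shows that each $u_{k}$ is a polynomial in $u$ with zero constant term, so $u_{k} \in uR$. For the second clause, a quick computation gives $u_{k+1} - u_{k} = a_{k}(1 - 2u_{k}) \in aR \subseteq I$, and the telescoping sum $u_{N} - u = \sum_{k=0}^{N-1} (u_{k+1} - u_{k})$ then lies in $I$.

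The main obstacle is really just finding the correct iteration: the map $t \mapsto 3t^{2} - 2t^{3}$ is the degree-three Newton-type polynomial whose derivative vanishes at both roots of $t^{2} - t$, which is precisely what makes the obstruction $a_{k}$ converge quadratically in powers of $a$. Once that choice is made, the identity for $a_{k+1}$ is a one-line computation and both containments $e \in uR$ and $e - u \in I$ come for free from the integer-polynomial form of the iterates, without any further structural assumptions on $R$ beyond commutativity (and indeed the same proof goes through in Lam's noncommutative setting since every intermediate element lies in $\mathbb{Z}[u]$).
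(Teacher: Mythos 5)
Your proof is correct. Note that the paper offers no proof of this lemma at all---it is imported verbatim as Theorem 21.28 of Lam's \emph{A First Course in Non-Commutative Rings}---so what you have supplied is a genuine standalone argument rather than a variant of an in-paper one. The computation checks out: writing $u_{k+1}=u_k+a_k(1-2u_k)$ and using $u_k^2=u_k+a_k$ gives $a_{k+1}=-4a_k^2+a_k^2(1+4a_k)=a_k^2(4a_k-3)$, so $a_k\in a^{2^k}R$ and the obstruction dies once $2^N\geq n$; the containments $e\in uR$ (zero constant term) and $e-u\in I$ (telescoping, since each $a_k\in aR\subseteq I$) follow as you say, and since every element in sight lies in the commutative subring $\mathbb{Z}[u]$ the argument is insensitive to whether $R$ is commutative. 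For comparison, the classical proof behind Lam's theorem takes $m$ with $(u-u^2)^m=0$ and reads the idempotent off the binomial expansion $1=(u+(1-u))^{2m-1}=\sum_{i=0}^{2m-1}\binom{2m-1}{i}u^i(1-u)^{2m-1-i}$, defining $e$ as the partial sum over $i\geq m$: idempotency follows because $e(1-e)$ is divisible by $u^m(1-u)^m=\pm(u^2-u)^m=0$, and $e\in u^mR\subseteq uR$ is visible at once. That route gives a closed formula in one step; yours trades the closed form for a transparent quadratic-convergence mechanism (the polynomial $3t^2-2t^3$ fixing $0$ and $1$ with vanishing derivative there). Either is an acceptable replacement for the citation.
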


\begin{lem}\label{l1.4} (See \cite[Lemma 2.4]{ah16}.)
Let $N$ be a minimal submodule of $M$ and let $Ann(M)$ be a nil
ideal. Then we have $N^{2}=(0)$ or $N=eM$ for some idempotent
$e\in R$.
\end{lem}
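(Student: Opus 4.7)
The plan is to assume $N^2 \neq (0)$ and manufacture the idempotent $e$ with $N = eM$. Set $I = (N:M)$. Since $N^2 = (N:M)(N:M)M = I^2 M$, the assumption $N^2 \neq (0)$ forces $IM \neq (0)$; as $IM \subseteq N$ is a nonzero submodule of the minimal submodule $N$, minimality gives $IM = N$. Running the same argument with $I^2 M \subseteq IM = N$ yields $I^2 M = N$ as well. Consequently $IN = I(IM) = I^2 M = N$.

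Next I would exploit that $N$, being a minimal submodule, is simple and hence cyclic: pick any nonzero $n \in N$ and write $N = Rn$. Since $n \in N = IN$, an expression $n = \sum_k i_k n_k$ with $i_k \in I$ and $n_k = r_k n \in Rn$ collapses to $n = in$ for the single element $i = \sum_k i_k r_k \in I$.

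The key technical step, and the one I expect to be the main obstacle, is to promote this pointwise identity $in = n$ into the two global statements $iM = N$ and $i^2 - i \in Ann(M)$. For the first, $iM \subseteq N$ is automatic from $i \in I$, while for the reverse inclusion any $rn \in Rn = N$ satisfies $rn = r(in) = i(rn) \in iM$. For the second, take $m \in M$; since $im \in N = Rn$ we may write $im = rn$ for some $r \in R$, and then
\[
i^2 m \;=\; i(im) \;=\; i(rn) \;=\; r(in) \;=\; rn \;=\; im,
\]
so $(i^2 - i)M = 0$. The subtle point is that these two claims rely on $N$ being both the image $IM$ and the cyclic module $Rn$, so one must juggle both descriptions.

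Finally, since $i + Ann(M)$ is idempotent in $R/Ann(M)$ and $Ann(M)$ is nil, Lemma \ref{l1.3} furnishes an idempotent $e \in iR$ with $e - i \in Ann(M)$. Then $(e - i)M = 0$ gives $eM = iM = N$, completing the proof.
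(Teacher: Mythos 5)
Your argument is correct: the chain $IM=N$, $IN=N$ (via minimality), the extraction of $i\in I$ with $in=n$ from cyclicity of the simple module $N$, the verification that $iM=N$ and $i^2-i\in Ann(M)$, and the final lifting of the idempotent modulo the nil ideal via Lemma \ref{l1.3} all check out. The paper itself gives no proof of this lemma --- it is quoted verbatim from \cite[Lemma 2.4]{ah16} --- so there is nothing internal to compare against, but your proof is the natural module-theoretic analogue of Brauer's lemma on minimal ideals and is surely the intended argument.
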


\begin{prop}\label{p1.5} Let $R/Ann(M)$ be an Artinian ring and let $M$ be
 a finitely generated module.
Then every nonzero proper submodule $N$ of $M$ is a vertex in
$AG(M)$.
\end{prop}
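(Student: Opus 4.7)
The plan is to take an arbitrary nonzero proper submodule $N<M$ and exhibit an explicit nonzero proper submodule $K<M$ with $NK=(0)$. The working ring is $\bar R := R/\operatorname{Ann}(M)$, which is Artinian by hypothesis. Since $M$ is finitely generated, $N\neq M$ forces $(N:M)\subsetneq R$ (otherwise $M=(N:M)M\subseteq N$), so $\bar I := (N:M)/\operatorname{Ann}(M)$ is a proper ideal of $\bar R$. I would split on whether $\bar I$ is zero. If $\bar I = 0$, i.e.\ $(N:M)=\operatorname{Ann}(M)$, then taking $K=N$ itself gives $NK=(N:M)^2 M\subseteq \operatorname{Ann}(M)M=0$, and $N$ is a nonzero proper submodule, so $N$ is a vertex.

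The substantive case is $\bar I\neq 0$. The central claim I would establish is that \emph{every proper nonzero ideal of an Artinian ring has a nonzero annihilator}. Using the Artinian decomposition $\bar R\cong R_1\times\cdots\times R_n$ with each $R_i$ local Artinian (cf.\ Proposition~\ref{p1.1} and Lemma~\ref{l1.2}), write $\bar I=I_1\times\cdots\times I_n$; if some $I_i=0$ then $\operatorname{Ann}_{R_i}(I_i)=R_i$ is a nonzero factor of $\operatorname{Ann}_{\bar R}(\bar I)$, while if every $I_i\neq 0$ then properness of $\bar I$ forces some $I_{i_0}$ to be a proper nonzero ideal of the local Artinian ring $R_{i_0}$, hence nilpotent, and the largest $k$ with $I_{i_0}^{k}\neq 0$ produces a nonzero element of $\operatorname{Ann}_{R_{i_0}}(I_{i_0})$. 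Either way $\bar A := \operatorname{Ann}_{\bar R}(\bar I)\neq 0$.

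Let $A$ be the preimage of $\bar A$ in $R$ and set $K=AM$. Then $K\neq 0$ because $\bar A\neq 0$ means $A\not\subseteq\operatorname{Ann}(M)$; moreover $K\neq M$, for if $AM=M$ then Nakayama's Lemma (applicable since $M$ is finitely generated) yields $a\in A$ with $(1-a)M=0$, hence $\bar 1=\bar a\in\bar A$, which together with $\bar A\bar I=0$ would force $\bar I=0$, a contradiction. Finally, since $A(N:M)\subseteq\operatorname{Ann}(M)$ and $(K:M)M=AM=K$, we compute
\[
(N:M)(K:M)M\;\subseteq\;(N:M)\cdot K\;=\;(N:M)\cdot AM\;\subseteq\;\operatorname{Ann}(M)\cdot M\;=\;0,
\]
so $NK=(0)$, and $N$ is a vertex of $AG(M)$. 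The main obstacle is the annihilator claim in the second paragraph; the rest is bookkeeping with ideal quotients and a single application of Nakayama to rule out $K=M$.
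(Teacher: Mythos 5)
The paper states Proposition~\ref{p1.5} without any proof or citation, so there is no in-text argument to compare yours against; judged on its own, your proof is correct and complete. The case split on whether $(N:M)$ collapses to $\operatorname{Ann}(M)$ is handled properly (taking $K=N$ is legitimate, since the definition of a vertex only asks for \emph{some} nonzero proper $K$ with $NK=(0)$, not one distinct from $N$). The key claim that a nonzero proper ideal of an Artinian ring has nonzero annihilator is proved correctly via the decomposition of $\bar R$ into local Artinian factors and the nilpotency of the maximal ideal in each factor; the use of the determinant-trick form of Nakayama to rule out $AM=M$ is exactly what is needed, since $A$ need not lie in the Jacobson radical; and the final computation $(N:M)(K:M)M\subseteq (N:M)AM\subseteq \operatorname{Ann}(M)M=(0)$ only uses the inclusion $(K:M)M\subseteq K$, so even the equality $(K:M)M=K$ you establish is more than is required. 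A marginal streamlining: under the paper's standing convention that $M$ is not a vertex of $AG(M)$, the first case cannot occur, and one could replace the product decomposition by the single observation that $\bar I$ lies in a maximal ideal $\mathfrak m$ of $\bar R$ and that $\prod_{j}\mathfrak m_j^{t}$ ranges over a nilpotent Jacobson radical; but your route through the structure theorem is clean and fully justified.
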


\begin{thm}\label{t1.6} (See \cite[Theorem 2.5]{ah16}.) Let $Ann(M)$ be a nil ideal.
There exists a vertex of $AG(M)$ which is adjacent to every other
vertex if and only if $M=eM\oplus (1-e)M$, where $eM$ is a simple
module and $(1-e)M$ is a prime module for some idempotent $e\in
R$, or $Z(M)=Ann((N:M)M)$, where $N$ is a nonzero proper submodule
of $M$ or $M$ is a vertex of $AG(M)$.
\end{thm}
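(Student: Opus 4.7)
The plan is to prove the biconditional in two implications. For the reverse direction, I would exhibit a universal vertex in each of the three cases separately. If $M$ is a vertex of $AG(M)$, the characterization recalled earlier in the introduction supplies a nonzero proper submodule $N$ with $(N:M) = Ann(M)$; any other vertex $K$ then satisfies $NK = Ann(M)(K:M)M = 0$, so $N$ is universal. If $Z(M) = Ann((N:M)M)$ for some nonzero proper $N$, I would argue that each vertex $K$ has $(K:M) \subseteq Z(M)$ (by choosing a witness submodule $L$ of $K$ with $(L:M) \neq Ann(M)$, which exists unless the case ``$M$ is a vertex of $AG(M)$'' already applies, reducing to the previous alternative); then $(K:M)(N:M)M = 0$, so $N$ is again universal. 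In case (i), the product formula of Proposition~\ref{p1.1}(d) together with simplicity of $eM$ and primeness of $(1-e)M$ constrains the vertex set enough that $eM$ (identified with $eM \oplus 0$) is adjacent to every other vertex.

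For the forward direction, suppose $L$ is a universal vertex and that neither condition (ii) nor (iii) holds; I would then force the decomposition of (i). The strategy is to combine the universality of $L$ with the failure of (ii) (applied to $N = L$ or a suitable related submodule) to extract an element $u \in R$ with $u^2 - u \in Ann(M)$ but $u \notin Ann(M)$. Since $Ann(M)$ is nil, Lemma~\ref{l1.3} lifts $u + Ann(M)$ to a genuine idempotent $e \in R$, and Proposition~\ref{p1.1} then delivers $M = eM \oplus (1-e)M$. Applying Lemma~\ref{l1.4} to a minimal submodule of $eM$ shows that $eM$ is simple, and a final argument using universality of $L$ shows no two nonzero proper submodules of $(1-e)M$ can multiply to zero, yielding primeness of $(1-e)M$.

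The main obstacle lies in the forward direction: coaxing an idempotent relation modulo $Ann(M)$ out of the mere existence of a universal vertex, given that universality only supplies a family of identities $(L:M)(K:M)M = 0$ across vertices $K$ rather than a single algebraic relation. One must show that negating both (ii) and (iii) provides enough slack to identify an element of $R$ squaring to itself modulo $Ann(M)$. The nil hypothesis is indispensable both for lifting such an idempotent via Lemma~\ref{l1.3} and for invoking Lemma~\ref{l1.4} to guarantee that a minimal submodule is a direct summand generated by an idempotent, which is what ultimately produces the simple factor appearing in (i).
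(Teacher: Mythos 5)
First, note that this paper does not prove Theorem~\ref{t1.6} at all: it is imported verbatim from \cite[Theorem 2.5]{ah16} and used here only as a quoted tool (e.g.\ in Remark~\ref{r2.1}). So there is no in-paper argument to measure your proposal against; what can be assessed is whether your sketch would stand on its own as a proof of the cited result.

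It would not, because the forward direction is a plan rather than an argument, and the plan omits its only hard step. You write that one should ``combine the universality of $L$ with the failure of (ii)\dots to extract an element $u\in R$ with $u^2-u\in Ann(M)$,'' and you yourself flag this extraction as ``the main obstacle'' --- but you never perform it. Nothing in the data of a universal vertex (a family of relations $(L:M)(K:M)M=(0)$) visibly produces an idempotent of $R/Ann(M)$, and until that element $u$ is exhibited, Lemma~\ref{l1.3} and Proposition~\ref{p1.1} have nothing to act on. The standard route (and the one the cited proof follows in spirit) is instead to pass to a \emph{minimal} submodule $N$ dominated by the universal vertex and invoke Lemma~\ref{l1.4} directly: the dichotomy $N^2=(0)$ or $N=eM$ is where the idempotent actually comes from, and the branch $N^2=(0)$ is what must be shown to force $Z(M)=Ann((N:M)M)$ or $M\in V(AG(M))$. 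Your sketch never engages with that branch, i.e.\ with how the negation of (ii) is concretely used. A secondary gap: in the reverse direction for case (i) you assert that simplicity of $eM$ and primeness of $(1-e)M$ ``constrain the vertex set enough'' for $eM\oplus 0$ to be universal. The needed fact is that (absent case (iii)) every vertex other than $eM\oplus 0$ has the form $0\oplus K_2$; this follows from Proposition~\ref{p1.1}(d) plus $Z((1-e)M)=Ann((1-e)M)$ for a prime module, but it is a genuine computation, not a one-line observation, and as written it is missing.
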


\begin{thm}\label{t1.7} (See \cite[Theorem 3.3]{ah16}.) Let $M$ be a faithful module.
Then the following statements are equivalent.

\begin{itemize}
\item [(a)] $\chi(AG(M)^{*})=2$. \item [(b)] $AG(M)^{*}$ is a
bipartite graph with two nonempty parts. \item [(c)] $AG(M)^{*}$
is a complete bipartite graph with two nonempty parts. \item [(d)]
Either $R$ is a reduced ring with exactly two minimal prime
ideals, or $AG(M)^{*}$ is a star graph with more than one vertex.
\end{itemize}

\end{thm}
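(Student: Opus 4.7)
The plan is to handle the trivial implications $(c) \Rightarrow (b) \Rightarrow (a)$ and $(a) \Rightarrow (b)$ directly, and do the substantive work in $(b) \Rightarrow (d)$ and $(d) \Rightarrow (c)$. The easy ones are graph-theoretic: $(c) \Rightarrow (b)$ is immediate; $(b) \Rightarrow (a)$ holds because a bipartition with two nonempty parts is a proper $2$-coloring that necessarily uses both colors; $(a) \Rightarrow (b)$ holds because $\chi(G) = 2$ gives $G$ bipartite with at least one edge, hence both color classes are nonempty.

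For $(b) \Rightarrow (d)$, the key reduction is that $Ann(M) = (0)$ makes $NK=(0)$ in $M$ equivalent to $(N:M)(K:M) = (0)$ in $R$, and more generally one checks $(xM:M)(yM:M)M \subseteq xyM$ using commutativity, so ring-level annihilation $xy = 0$ lifts to module-level annihilation $xM \cdot yM = (0)$. I would then split on whether $R$ is reduced. If $R$ is a domain, $(N:M)(K:M)=(0)$ forces one of the two ideals to vanish, contradicting vertex status in $AG(M)^*$, so the graph has no edges and $(b)$ fails. If $R$ is reduced with at least three minimal primes $P_1, P_2, P_3$, choose nonzero $x_i \in \bigcap_{j\neq i}P_j \setminus P_i$; then $x_ix_j \in \bigcap_k P_k = (0)$, producing (after verifying the three submodules $x_iM$ are distinct and proper) a triangle in $AG(M)^*$, contradicting bipartiteness. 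Therefore $R$ has exactly two minimal primes.

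If $R$ is not reduced, then $Ann(M) = (0)$ is trivially a nil ideal, so Theorem \ref{t1.6} applies. I would argue that one of its three structural alternatives must hold, producing a vertex $v$ of $AG(M)^*$ adjacent to every other vertex. In a bipartite graph, a universally-adjacent vertex must be the unique element of its part, and the opposite part must be independent; hence $AG(M)^*$ is a star $K_{1,n}$, and $n\geq 1$ since $(b)$ supplies an edge, landing us in the star case of $(d)$.

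For $(d) \Rightarrow (c)$, a star $K_{1,n}$ is itself complete bipartite. For the reduced case with minimal primes $P_1, P_2$, I would partition the vertex set as $V_i = \{N \in V(AG(M)^*) : (N:M) \subseteq P_i \text{ and } (N:M) \not\subseteq P_{3-i}\}$. Since $R$ is reduced, $Z(R) = P_1 \cup P_2$; any $(N:M)$ with nonzero annihilator ideal lies in $P_1 \cup P_2$ and, being nonzero with $P_1\cap P_2=(0)$, sits in exactly one part, while $P_1$ and $P_2$ furnish vertices making both parts nonempty. For $N \in V_1$ and $K \in V_2$, $(N:M)(K:M) \subseteq P_1 \cap P_2 = (0)$ forces an edge, whereas an intra-part edge would drop one of the two ideals into the wrong minimal prime. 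The main obstacle I anticipate is Case 2 of $(b) \Rightarrow (d)$: reliably extracting the dominating vertex from non-reducedness, since Theorem \ref{t1.6} offers three structural alternatives (an idempotent decomposition $M = eM \oplus (1-e)M$ with $eM$ simple and $(1-e)M$ prime, a zero-divisor equality $Z(M)=Ann((N:M)M)$, or $M$ itself being a vertex of $AG(M)$), and each branch must be traced to the star conclusion compatibly with bipartiteness.
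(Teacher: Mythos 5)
This statement is quoted from \cite[Theorem 3.3]{ah16}; the present paper contains no proof of it, so there is no internal argument to compare yours against and I can only judge the proposal on its own terms. The easy implications and both halves of the reduced case are essentially right: faithfulness converts $NK=(0)$ into $(N:M)(K:M)=(0)$, the domain case empties the vertex set, and in the two-minimal-primes case the partition by which of $P_1,P_2$ contains $(N:M)$ (using $Z(R)=P_1\cup P_2$, prime avoidance, and $P_1\cap P_2=(0)$) does yield a complete bipartite graph with $P_1M,P_2M$ witnessing nonemptiness. One wrinkle: your triangle construction for a reduced ring with at least three minimal primes takes $x_i\in\bigcap_{j\neq i}P_j\setminus P_i$; if the intersection runs over all minimal primes this set can be empty when $Min(R)$ is infinite, while if it runs over only $P_1,P_2,P_3$ then $x_ix_j$ lies in $P_1\cap P_2\cap P_3$, which need not be $(0)$. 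Since the statement carries no finiteness hypothesis on $Min(R)$, this case requires the argument of \cite{aa13} (via $Ann(x)\not\subseteq P$ for $x$ in a minimal prime $P$ of a reduced ring) rather than a bare intersection.

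The genuine gap is the one you flag yourself: the non-reduced case of $(b)\Rightarrow(d)$. Your plan is to argue that one of the three alternatives of Theorem \ref{t1.6} ``must hold,'' thereby producing a universal vertex; but Theorem \ref{t1.6} is an equivalence, and it delivers a universal vertex only after one of its three alternatives has been independently verified. You give no derivation of any of them from ``$R$ non-reduced and $AG(M)^{*}$ bipartite,'' and the only visible route to them is to already know the graph is a star --- which is the conclusion. What is needed is a direct argument: from $x\neq 0$ with $x^{2}=0$ one gets a vertex $N_0=xM$ with $N_0^{2}=(0)$ (since $(xM:M)^{2}M\subseteq x^{2}M=(0)$), and one must show that the absence of odd cycles forces $N_0$, or a suitable submodule of it, to be adjacent to every other vertex. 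The computations that $L:=N_0K$ satisfies $LN_0=(0)$ and $L^{2}=(0)$ whenever $N_0K\neq(0)$ are the raw material, but assembling them into a forbidden odd cycle is exactly the substance of the cited proof and is absent from your sketch; until it is supplied, the chain $(b)\Rightarrow(d)$ is not closed.
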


\begin{cor}\label{c1.8} (See \cite[Corollary 3.5]{ah16}.) Let $R$ be a reduced ring and assume
 that $M$ is a faithful module.
Then the following statements are equivalent.

\begin{itemize}
\item [(a)] $\chi(AG(M)^{*})=2$. \item [(b)] $AG(M)^{*}$ is a
bipartite graph with two nonempty parts. \item [(c)] $AG(M)^{*}$
is a complete bipartite graph with two nonempty parts. \item [(d)]
$R$ has exactly two minimal prime ideals.
\end{itemize}

\end{cor}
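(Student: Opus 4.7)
The plan is to derive this corollary from Theorem~\ref{t1.7} by showing that, under the extra assumption that $R$ is reduced, the ``star graph'' alternative appearing in condition (d) of that theorem is absorbed into the ``exactly two minimal primes'' alternative.

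Theorem~\ref{t1.7} already gives (a) $\Leftrightarrow$ (b) $\Leftrightarrow$ (c) using only the faithfulness hypothesis, and (d) $\Rightarrow$ (c) is immediate here because our (d) is precisely the first disjunct of condition (d) in Theorem~\ref{t1.7}. The substantive direction is (c) $\Rightarrow$ (d). Theorem~\ref{t1.7} tells us that (c) implies either $R$ has exactly two minimal prime ideals (which is (d) here) or $AG(M)^{*}$ is a star graph on more than one vertex. It therefore suffices to show that a reduced ring $R$ with a faithful module $M$ whose graph $AG(M)^{*}$ is a star on at least two vertices must have exactly two minimal prime ideals.

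The main technical step is to prove the stronger statement: if $R$ is reduced with three or more minimal primes and $M$ is faithful, then $AG(M)^{*}$ contains a triangle, and so is not 2-chromatic, let alone a star. Working in the paper's implicit setting of finitely many minimal primes $P_{1},\ldots,P_{k}$, fix three of them $P_{1},P_{2},P_{3}$ and set $J_{j}=\bigcap_{i\neq j}P_{i}$, the intersection running over all minimal primes other than $P_{j}$. Prime avoidance applied to the minimality of $P_{j}$ gives $J_{j}\not\subseteq P_{j}$, hence $J_{j}\neq 0$, while reducedness yields $J_{j}J_{\ell}\subseteq\bigcap_{i}P_{i}=(0)$ for $j\neq\ell$. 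Put $N_{j}=J_{j}M$. Faithfulness of $M$ yields $N_{j}\neq 0$, and $N_{j}\neq M$ because otherwise $J_{\ell}M=J_{\ell}J_{j}M=(0)$ would contradict $J_{\ell}\neq 0$ together with faithfulness. For any $r\in(N_{j}:M)$ and $s\in(N_{\ell}:M)$ one has $rM\subseteq J_{j}M$ and $sM\subseteq J_{\ell}M$, so $(rs)M=r(sM)\subseteq r(J_{\ell}M)=J_{\ell}(rM)\subseteq J_{\ell}(J_{j}M)=(J_{\ell}J_{j})M=(0)$; faithfulness now gives $rs=0$, so $(N_{j}:M)(N_{\ell}:M)=0$ and hence $N_{j}N_{\ell}=(0)$. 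Therefore $\{N_{1},N_{2},N_{3}\}$ is a triangle in $AG(M)^{*}$.

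In the remaining degenerate case $R$ is reduced with at most one minimal prime, so $R$ is an integral domain; then no two nonzero ideals of $R$ have product zero and $AG(M)^{*}$ has no edges, contradicting the star alternative. Together these two cases force exactly two minimal primes when $R$ is reduced, completing (c) $\Rightarrow$ (d). The main obstacle I foresee is the module-theoretic step $(N_{j}:M)(N_{\ell}:M)=(0)$, which does not follow directly from $J_{j}J_{\ell}=(0)$ since $(N_{j}:M)$ may be strictly larger than $J_{j}$; the computation above resolves this and crucially uses the faithfulness of $M$.
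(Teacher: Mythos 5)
The paper does not actually prove this corollary: it is imported verbatim from \cite[Corollary 3.5]{ah16}, so there is no in-paper argument to compare yours against. That said, your route --- run Theorem \ref{t1.7} and then show that, for a reduced ring, the ``star graph'' disjunct forces exactly two minimal primes anyway --- is the natural derivation, and the reduction itself ((a)$\Leftrightarrow$(b)$\Leftrightarrow$(c) from Theorem \ref{t1.7}, (d)$\Rightarrow$(c) because your (d) is the first disjunct of Theorem \ref{t1.7}(d)) is correct. Your triangle construction is also essentially right, and you correctly identify and resolve the one nontrivial module-theoretic point, namely that $N_jN_\ell=(N_j:M)(N_\ell:M)M$ can vanish even though $(N_j:M)$ may properly contain $J_j$; the computation $(rs)M\subseteq (J_\ell J_j)M=(0)$ is what makes this work. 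One small omission: you never check that $N_1,N_2,N_3$ are pairwise distinct, which a triangle requires. This is easy to repair: if $N_j=N_\ell$ then $(N_j:M)^2M=(0)$, so $(N_j:M)^2=(0)$ by faithfulness, contradicting reducedness since $(N_j:M)\supseteq J_j\neq(0)$.

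The more substantive issue is the finiteness of $Min(R)$, which you flag but do not resolve. The corollary as stated carries no hypothesis $|Min(R)|<\infty$, and your construction genuinely needs it twice: the ideal $J_j=\bigcap_{i\neq j}P_i$ is an infinite intersection otherwise, and the step ``$P_j\supseteq\bigcap_{i\neq j}P_i$ implies $P_j\supseteq P_i$ for some $i$'' (which is what gives $J_j\neq 0$) is only valid for finitely many factors. So the implication (c)$\Rightarrow$(d) is not established when $R$ is reduced with infinitely many minimal primes; ruling that case out requires a different device, e.g.\ the localization fact that for a minimal prime $P$ of a reduced ring and $a\in P$ one has $Ann(a)\not\subseteq P$, from which a triangle can be built using only three of the minimal primes. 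Within this paper the gap is harmless --- Corollary \ref{c1.8} is only invoked in Theorem \ref{t3.3} and Corollary \ref{c3.4}, where $|Min(R)|<\infty$ is assumed --- but relative to the statement as written your argument covers only the finite case.
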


\begin{prop}\label{p1.9} (See \cite[Proposition 3.9]{hhs}.) Every minimal dominating set
in a graph $G$ is a maximal irredundant set of $G$.
\end{prop}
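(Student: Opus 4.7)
The plan is to split the statement into two parts: first, that a minimal dominating set $D$ of $G$ is irredundant, and second, that it is maximal with respect to this property. Both parts will be verified by unwinding the definitions of the private neighborhood $P_N(v, S)$ and of a dominating set, exploiting minimality in one direction and the covering property of $D$ in the other.

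For irredundance, I would fix an arbitrary $v \in D$ and exhibit a private neighbor relative to $D$. Minimality of $D$ means that $D \setminus \{v\}$ is not a dominating set, so there is a vertex $w$ of $G$ with $w \notin N[u]$ for every $u \in D \setminus \{v\}$. Since $D$ itself dominates $G$, there must exist some $x \in D$ with $w \in N[x]$, and the only candidate left is $x = v$. Therefore $w \in N[v] \setminus \bigcup_{u \in D \setminus \{v\}} N[u] = P_N(v, D)$, so $v$ has a private neighbor; as $v$ was arbitrary, $D$ is irredundant.

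For maximality, I would pick any $u \in V(G) \setminus D$ and show that $D \cup \{u\}$ fails to be irredundant by verifying that $u$ itself has empty private neighborhood in the enlarged set. Since $D$ is dominating, every vertex of $G$ — and in particular every element of $N[u]$ — already lies in $N[x]$ for some $x \in D = (D \cup \{u\}) \setminus \{u\}$. Consequently $P_N(u, D \cup \{u\}) = N[u] \setminus \bigcup_{x \in D} N[x] = \emptyset$, which is exactly what is needed to conclude that $D \cup \{u\}$ is not irredundant, and hence that $D$ is maximal among irredundant sets.

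This is essentially a textbook argument and I do not expect a genuine obstacle. The only point requiring attention is the bookkeeping with closed versus open neighborhoods: in the irredundance step one must allow the possibility $w = v$ (which corresponds to $v$ having no neighbor in $D \setminus \{v\}$), and in the maximality step one must use that $D$ dominates \emph{all} of $V(G)$, not merely $V(G) \setminus D$.
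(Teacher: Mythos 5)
Your argument is correct and complete: the irredundance half correctly extracts, from the non-domination of $D\setminus\{v\}$, a vertex $w$ that can only be covered by $N[v]$ and hence lies in $P_N(v,D)$, and the maximality half correctly observes that since $D$ already dominates all of $V(G)$, any added vertex $u$ satisfies $P_N(u,D\cup\{u\})=N[u]\setminus\bigcup_{x\in D}N[x]=\emptyset$, so $D\cup\{u\}$ is not irredundant. Note that the paper itself gives no proof of this proposition --- it is quoted from \cite[Proposition 3.9]{hhs} --- so there is nothing to compare against; what you have written is the standard textbook argument, and your closing remarks about closed neighborhoods (allowing $w=v$, and using that $D$ dominates all of $V(G)$, not just $V(G)\setminus D$) address exactly the points where such a proof could otherwise go wrong.
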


\section{Domination number in the annihilating-submodule graph for Artinian modules}

The main goal in this section, is to obtain the value certain
domination parameters of the annihilating-submodule graph for
Artinian modules.

Recall that $M$ is a vertex of $AG(M)$ if and only if there exists
a nonzero proper submodule $N$ of $M$ with $(N:M)=Ann(M)$ if and
only if every nonzero submodule of $M$ is a vertex of $AG(M)$. In
this case, the vertex $N$ is adjacent to every other vertex. Hence
$\gamma(AG(M))=1=\gamma_t((AG(M)))$.  So we assume that
\textbf{throughout this paper $M$ is not a vertex of $AG(M)$}.
Clearly, if $M$ is not a vertex of $AG(M)$, \textbf{then
$AG(M)=AG(M)^{*}$.}

We start with the following remark which completely characterizes
all modules for which $\gamma((AG(M))) = 1$.

\begin{rem}\label{r2.1} Let $Ann(M)$ be a nil ideal. By Theorem
\ref{t1.6}, there exists a vertex of $AG(M)$ which is adjacent to
every other vertex if and only if $M=eM\oplus (1-e)M$, where $eM$
is a simple module and $(1-e)M$ is a prime module for some
idempotent $e\in R$, or $Z(M)=Ann((N:M)M)$, where $N$ is a nonzero
proper submodule of $M$ or $M$ is a vertex of $AG(M)$. Now, let
$Ann(M)$ be a nil ideal and $M$ be a domain module. Then
$\gamma((AG(M))) = 1$ if and only if $M=eM\oplus (1-e)M$, where
$eM$ is a simple module and $(1-e)M$ is a prime module for some
idempotent $e\in R$.
\end{rem}

\begin{thm}\label{t2.2} Let $M$ be a f.g Artinian local module. Assume
that $N$ is the unique maximal submodule of $M$. Then the radius
of $AG(M)$ is $0$ or $1$ and the center of $AG(M)$ is
$\{K\subseteq ann(N)| K\neq (0)$ is a submodule in $M\}$.
\end{thm}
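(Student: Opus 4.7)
\medskip

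\noindent\textbf{Proof plan.} I would work in two steps: first pin down the algebraic structure of $M$, then translate the adjacency condition into a containment in $\operatorname{ann}(N)$.

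\emph{Reduction to the cyclic case.} Because $M$ is finitely generated, Artinian, and has a unique maximal submodule $N$, the quotient $M/N$ is simple. Picking any $x\in M\setminus N$, the submodule $Rx$ is not contained in $N$, so by maximality $Rx=M$; hence $M$ is cyclic and $M\cong R/\operatorname{Ann}(M)$. In particular $M$ is a multiplication module: $(K:M)M=K$ for every submodule $K$. Since $R/\operatorname{Ann}(M)$ is Artinian, Proposition~\ref{p1.5} also guarantees that every nonzero proper submodule of $M$ is a vertex of $AG(M)$.

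\emph{Inclusion $\{K\subseteq \operatorname{ann}(N):K\ne 0\}\subseteq \text{center}$.} Let $K$ be such a submodule and $L$ any other vertex. Since $L$ is proper and $N$ is the only maximal submodule, $L\subseteq N$, hence $(L:M)\subseteq (N:M)$. Using $(K:M)M\subseteq K\subseteq \operatorname{ann}(N)$ together with $(N:M)\cdot \operatorname{ann}(N)=0$,
\[
LK \;=\;(L:M)(K:M)M \;\subseteq\; (N:M)\cdot \operatorname{ann}(N)\;=\;0.
\]
Thus $K$ is adjacent to every other vertex, so its eccentricity is at most $1$. This forces the radius of $AG(M)$ to be at most $1$; it is $0$ exactly when $|V(AG(M))|=1$.

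\emph{Inclusion $\text{center}\subseteq \{K\subseteq \operatorname{ann}(N):K\ne 0\}$.} Suppose $K$ is a vertex with $K\not\subseteq \operatorname{ann}(N)$. Then $(N:M)K\ne 0$, and by the multiplication-module identity from the reduction,
\[
NK\;=\;(N:M)(K:M)M\;=\;(N:M)K\;\ne\;0.
\]
Since $N$ itself is a vertex (Proposition~\ref{p1.5}) and $N\ne K$ in this situation, $d(K,N)\ge 2$, so $K$ cannot lie in the center. Combining the two inclusions gives the displayed description of the center.

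\emph{Main obstacle.} The delicate step is the second inclusion: one needs to promote the hypothesis $K\not\subseteq \operatorname{ann}(N)$ (an elementwise condition) to the submodule identity $NK\ne 0$ (a statement about $(N:M)(K:M)M$). Without knowing $(K:M)M=K$, one only has $(K:M)M\subseteq K$, and the implication can fail. It is precisely the cyclicity extracted in the reduction step that allows this bridge and pins the center down to $\operatorname{ann}(N)$.
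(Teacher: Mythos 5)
Your outline for the center coincides with the paper's: show that every nonzero $K\subseteq ann(N)$ is adjacent to all other vertices, and that $K\nsubseteq ann(N)$ forces $NK\neq (0)$. Your reduction to the cyclic case is in fact a genuine improvement on the paper, whose proof simply asserts ``$K'N\neq (0)$'' for $K'\nsubseteq ann(N)$; as you correctly isolate in your last paragraph, this implication needs $(K:M)M=K$, and the cyclicity of a finitely generated module with a unique maximal submodule supplies it.

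There is, however, a genuine gap in your treatment of the radius: you never produce a nonzero submodule contained in $ann(N)$. Your sentence ``this forces the radius of $AG(M)$ to be at most $1$'' is conditional on the existence of such a $K$, and a priori the set you claim to be the center could be empty. This is exactly where the paper uses the Artinian local hypothesis: $(N:M)$ is nilpotent modulo $Ann(M)$, so there is $m\geq 2$ with $N^{m}=(0)$ and $N^{m-1}\neq (0)$, and then $N^{m-1}$ is a nonzero submodule of $ann(N)$ adjacent to every other vertex, which settles both the radius and the nonemptiness of the center; you need to supply this step. A secondary point: in your second inclusion you assert ``$N\neq K$ in this situation,'' but $K\nsubseteq ann(N)$ does not exclude $K=N$ (this happens whenever $N^{2}\neq (0)$), and to expel $N$ from the center one needs a vertex $L\neq N$ with $NL\neq (0)$, which does not follow from $N^{2}\neq (0)$. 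The paper's proof has the same lacuna, and the example $M=\mathbb{Z}/8\mathbb{Z}$ --- where $N=2M\nsubseteq ann(N)=4M$ yet $N$ is adjacent to the only other vertex $4M$, so that $N$ lies in the center --- shows this boundary case genuinely matters.
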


\begin{proof}

If $N$ is the only non-zero proper submodule of $M$, then
$AG(M)\cong K_1$, $e(N) = 0$ and the radius of $AG(M)$ is $0$.
Assume that the number of non-zero proper submodules of $M$ is
greater than $1$. Since $M$ is f.g Artinian module, there exists
$m\in \Bbb N$, $m > 1$ such that $N^m = (0)$ and $N^{m-1}\neq
(0)$. For any non-zero submodule $K$ of $M$, $KN^{m-1}\subseteq
NN^{m-1} = (0)$ and so $d(N^{m-1}, K) = 1$. Hence $e(N^{m-1}) = 1$
and so the radius of $AG(M)$ is $1$. Suppose $K$ and $L$ are
arbitrary non-zero submodules of $M$ and $K\subseteq ann(N)$. Then
$KL\subseteq KN = (0)$ and hence $e(K) = 1$. Suppose $(0)\neq K'
\nsubseteq ann(N)$. Then $K'N\neq (0)$ and so $e(K') > 1$. Hence
the center of $AG(M)$ is $\{K\subseteq ann(N)| K\neq (0)$ is a
submodule in $M\}$.

\end{proof}

\begin{cor}\label{c2.3}
Let $M$ be a f.g Artinian local module and $N$ is the unique
maximal submodule of $M$. Then the following hold good.

\begin{itemize}
\item [(a)] $\gamma(AG(M))=1$.

\item [(b)] $D$ is a $\gamma$-set of $AG(M)$ if and only if
$D\subseteq ann(N)$.
 \end{itemize}
\end{cor}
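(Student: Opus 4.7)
The plan is to derive both parts of the corollary directly from Theorem \ref{t2.2}, which has already identified the radius and the center of $AG(M)$.

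For part (a), I would observe that a singleton $\{K\}$ is a dominating set of $AG(M)$ if and only if $K$ is adjacent to every other vertex, i.e., $e(K)\leq 1$. Theorem \ref{t2.2} shows that the radius of $AG(M)$ is $0$ or $1$, so at least one such $K$ exists. In the degenerate case where $N$ is the only nonzero proper submodule, $AG(M)\cong K_1$ and $\{N\}$ trivially dominates. Otherwise the argument in the proof of Theorem \ref{t2.2} produces $N^{m-1}\neq (0)$ with $N^{m-1}\subseteq \mathrm{ann}(N)$, and for every nonzero submodule $L$ of $M$ we have $LN^{m-1}\subseteq NN^{m-1}=(0)$, so $\{N^{m-1}\}$ dominates $AG(M)$. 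Hence $\gamma(AG(M))=1$.

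For part (b), since $\gamma(AG(M))=1$ by (a), a $\gamma$-set is precisely a singleton $D=\{K\}$ with $K$ adjacent to every other vertex of $AG(M)$, equivalently a singleton center vertex. By Theorem \ref{t2.2}, the center is exactly $\{K\subseteq \mathrm{ann}(N)\mid K\neq (0)\text{ is a submodule of }M\}$, so $D$ is a $\gamma$-set iff $D\subseteq \mathrm{ann}(N)$ (and $D$ is a nonzero submodule, which is implicit in $D$ being a vertex). For the converse direction, one checks that any nonzero $K\subseteq \mathrm{ann}(N)$ is indeed a vertex of $AG(M)$: for any nonzero proper submodule $L$, $(K:M)(L:M)M\subseteq (K:M)M\subseteq \mathrm{ann}(N)$ and multiplying once more by $N$ gives zero, so via the contained relation $KL\subseteq KN=(0)$ in the proof of Theorem \ref{t2.2}, $K$ annihilates every other vertex and in particular is itself a vertex.

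No step looks substantive; the whole corollary is essentially an unpacking of the radius/center statement in Theorem \ref{t2.2}. The only point that requires a line of care is making sure that an arbitrary nonzero $K\subseteq \mathrm{ann}(N)$ really does qualify as a vertex of $AG(M)$ (so that the characterization in (b) is phrased correctly), but this is immediate from $KN=(0)$ with $N\neq (0)$ proper.
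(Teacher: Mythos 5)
Your proposal is correct and follows essentially the same route as the paper: part (a) comes from the radius being $0$ or $1$ (the paper's proof cites Theorem \ref{t2.6}, evidently a typo for Theorem \ref{t2.2}), and part (b) repeats the adjacency computations from the proof of Theorem \ref{t2.2} ($K\nsubseteq ann(N)$ forces $KN\neq(0)$ so $N$ is undominated, while any nonzero $K\subseteq ann(N)$ annihilates every vertex), which is exactly the identification of $\gamma$-sets with center vertices that you make explicit. No substantive difference.
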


\begin{proof}
 $(a)$ Trivial from Theorem \ref{t2.6}.\\
 $(b)$ Let $D = \{K\}$ be a $\gamma$-set of $AG(M)$. Suppose $K\nsubseteq ann(N)$.
  Then $KN\neq (0)$
and so $N$ is not dominated by $K$, a contradiction. Conversely,
suppose $D\subseteq ann(N)$. Let $K$ be an arbitrary vertex in
$AG(M)$. Then $KL\subseteq NL = (0)$ for every $L\in D$. i.e.,
every vertex $K$ is adjacent to every $L\in D$. If $|D| > 1$, then
$D\setminus \{L'\}$ is also a dominating set of $AG(M)$ for some
$L'\in D$ and so $D$ is not minimal. Thus $|D| = 1$ and so $D$ is
a $\gamma$-set by $(a)$.
\end{proof}

\begin{thm}\label{t2.4}
Let $M=\oplus _{i=1}^n M_i$, where $M_i$ is a f.g Artinian local
module for all $1\leq i\leq n$ and $n\geq 2$. Then the radius of
$AG(M)$ is $2$ and the center of $AG(M)$ is $\{K\subseteq J(M)|
K\neq (0)$ is a submodule in $M \}$.
\end{thm}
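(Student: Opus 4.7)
The plan is to mirror the approach of Theorem~\ref{t2.2}, extending from a single local block to the direct sum by using the idempotent decomposition from Proposition~\ref{p1.1}. First I would set up the structural tools: iterating Proposition~\ref{p1.1} with the orthogonal idempotents giving $M_i=e_iM$, every submodule $K\leq M$ splits as $K=K_1\oplus\cdots\oplus K_n$ with $K_i\leq M_i$, products split componentwise via $K_iL_i=(K_i:M_i)(L_i:M_i)M_i$, the maximal submodules of $M$ are the $M_1\oplus\cdots\oplus N_j\oplus\cdots\oplus M_n$ where $N_j$ is the unique maximal submodule of $M_j$, and consequently $J(M)=N_1\oplus\cdots\oplus N_n$. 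Each $M_i$ is f.g.\ Artinian local, giving a smallest $m_i\geq 1$ with $N_i^{m_i}=0$; also, $R/Ann(M)$ embeds into $\prod_i R/Ann(M_i)$, which is Artinian, so by Proposition~\ref{p1.5} every nonzero proper submodule of $M$ is automatically a vertex of $AG(M)$.

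Next I would show that any nonzero $K\subseteq J(M)$ has eccentricity exactly $2$. For the upper bound, given any vertex $L$, I choose a coordinate $i_0$ with $L_{i_0}\subseteq N_{i_0}$ (such an $i_0$ exists because $L\neq M$) and form
\[
P=0\oplus\cdots\oplus N_{i_0}^{m_{i_0}-1}\oplus\cdots\oplus 0,
\]
interpreting $N_{i_0}^{0}$ as $M_{i_0}$ when $M_{i_0}$ is simple. The componentwise product formula with $K_{i_0},L_{i_0}\subseteq N_{i_0}$ immediately yields $KP=LP=0$; and $P$ is itself a vertex. Either $P=K$, so that $KL=PL=0$ directly, or $P\neq K$ is a common neighbour; in either case $d(K,L)\leq 2$. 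For the lower bound, the vertex condition forces some $(K_{i_0}:M_{i_0})\not\subseteq Ann(M_{i_0})$; the axis $L^{(i_0)}=0\oplus\cdots\oplus M_{i_0}\oplus\cdots\oplus 0$ is distinct from $K$ because $K_{i_0}\subseteq N_{i_0}\subsetneq M_{i_0}$, and $KL^{(i_0)}=(K_{i_0}:M_{i_0})M_{i_0}\oplus 0\neq 0$, so $d(K,L^{(i_0)})\geq 2$. Together these give $e(K)=2$.

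Finally I would show that any vertex $K\not\subseteq J(M)$ has eccentricity at least $3$. Let $S=\{i:K_i=M_i\}$, which is nonempty (since $K\not\subseteq J(M)$) and a proper subset of $\{1,\dots,n\}$ (since $K\neq M$), and introduce the companion
\[
L_i=\begin{cases}N_i,&i\in S,\\ M_i,&i\notin S.\end{cases}
\]
Then $L$ is nonzero and proper, hence a vertex. A componentwise computation gives $KL\neq 0$, and the crucial observation is that any common neighbour $P$ of $K$ and $L$ is forced to satisfy $(P_i:M_i)\subseteq Ann(M_i)$ for \emph{every} $i$: via $K$-adjacency for $i\in S$ (since $K_i=M_i$) and via $L$-adjacency for $i\notin S$ (since $L_i=M_i$). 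Hence $(P:M)=\prod(P_i:M_i)\subseteq Ann(M)$, forcing equality, which contradicts the standing hypothesis that $M$ is not a vertex of $AG(M)$. Thus no common neighbour exists and $d(K,L)\geq 3$.

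Combining the three steps identifies the centre as $\{K\subseteq J(M):K\neq 0\}$ and pins the radius at $2$. The main obstacle is the last step: the companion $L$ must be proper, nonzero, a vertex, and chosen so that the ``swap'' pattern genuinely forces $(P:M)=Ann(M)$ on every candidate common neighbour; the interplay between the nilpotency indices $m_i$ and the cases where some $M_i$ is simple (so $N_i=0$) is where this argument is most delicate and has to be handled with care.
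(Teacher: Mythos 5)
Your first two steps are sound, and in fact more complete than the paper's own treatment: the paper's Case~2 only establishes $e(K)\leq 2$ for $K\subseteq J(M)$ and tacitly assumes a non-neighbour of $K$ exists, whereas you exhibit the explicit witness $L^{(i_0)}$ (your hub $P$ is exactly the paper's $D_{i_0}$). Your third step departs from the paper's route: instead of computing $N(K)\cap N(N_j)$ and displaying length-$3$ paths $K-D_j-D_i-N_i$, you rule out common neighbours of $K$ and a swapped companion $L$ by showing any such $P$ would satisfy $(P:M)=Ann(M)$, contradicting the standing assumption that $M$ is not a vertex. That non-adjacency mechanism is correct and cleaner than the paper's.

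The genuine gap is precisely the case you defer at the end, and it cannot be closed. Take $S=\{i_0\}$ with $M_{i_0}$ simple and $K=0\oplus\cdots\oplus M_{i_0}\oplus\cdots\oplus 0$: your companion has $L_{i_0}=N_{i_0}=0$ and $L_j=M_j$ for $j\neq i_0$, so $KL=M_{i_0}\cdot 0\oplus\bigl(\oplus_{j\neq i_0}0\cdot M_j\bigr)=0$ and $K,L$ are adjacent. No other companion repairs this: the non-neighbours of $K$ are exactly the proper vertices $W$ with $W_{i_0}=M_{i_0}$, which must have $W_j\subseteq N_j$ for some $j\neq i_0$, and then $0\oplus\cdots\oplus N_j^{m_j-1}\oplus\cdots\oplus 0$ (read $M_j$ when $M_j$ is simple) is a common neighbour of $K$ and $W$. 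Hence $e(K)=2$ although $K\not\subseteq J(M)$, and the stated description of the centre is false. Concretely, for $R=M=k\times k[x]/(x^2)$ the graph $AG(M)$ is the path $0\times k[x]/(x^2)\,$--$\,k\times 0\,$--$\,0\times(x)\,$--$\,k\times(x)$, whose centre contains $k\times 0\not\subseteq J(M)$. This is not a defect peculiar to your write-up: the paper's Cases~1 and~3 assert $KN_i\neq(0)$ when $K_i=M_i$, which fails equally when $M_i$ is simple, and Corollary~\ref{c2.5} implicitly concedes that the all-simple case has a different centre. The statement, and therefore any proof of it, needs the extra hypothesis that no $M_i$ is simple (equivalently $N_i\neq 0$ for all $i$); under that hypothesis $M_iN_i=(N_i:M_i)M_i\neq 0$ for $i\in S$, your $KL\neq 0$ holds, and your argument goes through.
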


\begin{proof}
Let $M=\oplus _{i=1}^n M_i$, where $M_i$ is a f.g Artinian local
module for all $1\leq i\leq n$ and $n\geq 2$. Let $J_i$ be the
unique maximal submodule in $M_i$ with nilpotency $n_i$. Note that
$Max(M) = \{N_1,\ldots ,N_n| N_i = M_1 \oplus \ldots \oplus
M_{i-1} \oplus J_i\oplus M_{i+1}\oplus \ldots \oplus M_n, 1\leq
i\leq n\}$ is the set of all maximal submodules in $M$. Consider
$D_i = (0) \oplus \ldots \oplus (0) \oplus J_i^{n_i-1}\oplus
(0)\oplus \ldots \oplus (0)$ for $1\leq i\leq n$. Note that $J(M)
= J_1\oplus \ldots \oplus J_n$ is the Jacobson radical of $M$ and
any non-zero submodule in $M$ is adjacent to $D_i$ for some $i$.
Let $K$ be any non-zero submodule of $M$. Then
$K=\oplus _{i=1}^n K_i$, where $K_i$ is a submodule of $M_i$.\\
\textbf{Case 1}. If $K = N_i$ for some $i$, then $KD_j\neq (0)$
and $KN_j\neq (0)$ for all $j\neq i$. Note that $N(K)=\{(0) \oplus
\ldots \oplus (0) \oplus L_i\oplus (0)\oplus \ldots \oplus (0)|
J_iL_i= (0)$, $L_i$ is a nonzero submodule in $M_i \}$. Clearly
$N(K)\cap N(N_j) = (0)$, $d(K,N_j)\neq 2$ for all $j\neq i$, and
so $K- D_i - D_j
 - N_j$ is a path in $AG(M)$. Therefore $e(K) = 3$ and so $e(N) = 3$ for
all $N\in Max(M)$.\\
\textbf{Case 2}. If $K\neq D_i$ and $K_i \subseteq J_i$ for all
$i$. Then $KD_i = (0)$ for all $i$. Let $L$ be any non-zero
submodule of $M$ with $KL\neq (0)$. Then $LD_j = (0)$ for some
$j$, $K - D_j - L$ is a path in $AG(M)$ and so $e(K) = 2$.\\
\textbf{Case 3}. If $K_i = M_i$ for some $i$, then $KD_i\neq (0)$,
$KN_i \neq (0)$ and $KD_j = (0)$ for some $j\neq i$. Thus $K - D_j
- D_i - N_i$ is a path in $AG(M)$, $d(K,N_i) = 3$ and so $e(K) =
3$. Thus $e(K) = 2$ for all $K\subseteq J(M)$. Further note that
in all the cases center of $AG(M)$ is $\{K\subseteq J(M)| K\neq
(0)$ is a submodule in $M \}$.
\end{proof}

In view of Theorems \ref{t2.2} and \ref{t2.4}, we have the
following corollary.

\begin{cor}\label{c2.5}
Let $M=\oplus _{i=1}^n M_i$, where $M_i$ is a simple module for
all $1\leq i\leq n$ and $n\geq 2$. Then the radius of $AG(M)$ is
$1$ or $2$ and the center of $AG(M)$ is $\cup_{i=1}^n D_i$, where
$D_i = (0) \oplus \ldots \oplus (0) \oplus M_i\oplus (0)\oplus
\ldots \oplus (0)$ for $1\leq i\leq n$.
\end{cor}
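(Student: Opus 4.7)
The plan is to verify the statement directly, exploiting the combinatorial structure of submodules in a direct sum of pairwise non-isomorphic simple modules (as is implicit in the Artinian decomposition used in Theorem~\ref{t2.4}). Since each $M_i$ is simple, its only submodules are $(0)$ and $M_i$, so every submodule of $M=\bigoplus_{i=1}^n M_i$ has the form $N_S=\bigoplus_{i\in S}M_i$ for some $S\subseteq\{1,\ldots,n\}$; hence the vertex set of $AG(M)$ consists exactly of the $N_S$ with $\emptyset\neq S\subsetneq\{1,\ldots,n\}$, and in the notation of Theorem~\ref{t2.4} one has $D_i=N_{\{i\}}$. A direct calculation using $(N_S:M)=\bigcap_{i\notin S}Ann(M_i)$ and the product definition $N_SN_T=(N_S:M)(N_T:M)M$ yields $N_SN_T=N_{S\cap T}$; in particular $N_S$ and $N_T$ are adjacent in $AG(M)$ precisely when $S\cap T=\emptyset$.

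I then split on $n$. For $n=2$ the graph has only two vertices $D_1,D_2$, and they are adjacent, so $AG(M)\cong K_2$; the radius is $1$ and the center is $\{D_1,D_2\}=\bigcup_{i=1}^2 D_i$. For $n\geq 3$, I would first verify that $e(D_i)=2$ for every $i$: given any vertex $N_T\neq D_i$, either $i\notin T$ (so $D_i$ and $N_T$ are already adjacent), or $i\in T$, in which case $T^c$ is nonempty because $T$ is proper and $N_{T^c}$ is a common neighbor of $D_i$ and $N_T$, yielding $d(D_i,N_T)\leq 2$; the lower bound $e(D_i)\geq 2$ follows because $D_i$ is not adjacent to $N_{\{i,j\}}$ for any $j\neq i$. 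For vertices $N_S$ with $|S|\geq 2$ I would pick any $j\in S$ and set $U=S^c\cup\{j\}$. Then $U$ is a nonempty proper subset of $\{1,\ldots,n\}$ (since $|U|=n-|S|+1\leq n-1$), $S\cap U=\{j\}\neq\emptyset$ so $N_S$ and $N_U$ are not adjacent, and $S^c\cap U^c=S^c\cap(S\setminus\{j\})=\emptyset$, so $N_S$ and $N_U$ share no common neighbor. Hence $d(N_S,N_U)\geq 3$ and $e(N_S)\geq 3$, so $N_S$ is not in the center. Combining, for $n\geq 3$ the center equals $\{D_1,\ldots,D_n\}=\bigcup_{i=1}^n D_i$ and the radius is $2$.

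The only step that is not purely combinatorial—and the one I expect to be the main obstacle—is the adjacency identity $N_SN_T=N_{S\cap T}$. It requires controlling the product of the ideals $\bigcap_{i\notin S}Ann(M_i)$ and $\bigcap_{i\notin T}Ann(M_i)$ when they act on $M$, and depends on distinct simple components having distinct maximal annihilator ideals (so that these ideals behave comaximally on the components they fail to annihilate). Once this identity is in place, the rest is a clean subset-theoretic argument on the Boolean lattice of subsets of $\{1,\ldots,n\}$, paralleling in a simpler form the casework in the proof of Theorem~\ref{t2.4}, and handling both the radius-$1$ case ($n=2$) and the radius-$2$ case ($n\geq 3$) uniformly.
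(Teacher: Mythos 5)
Your proof is correct, but it takes a genuinely different route from the paper. The paper offers no standalone argument for this corollary: it simply reads it off from Theorems \ref{t2.2} and \ref{t2.4} as the special case in which each Artinian local factor is simple. You instead verify everything directly on the Boolean lattice: submodules of $M$ are the $N_S=\oplus_{i\in S}M_i$, the product rule $N_SN_T=N_{S\cap T}$ turns adjacency into disjointness of index sets, and the radius/center computation becomes pure set combinatorics. Your version buys real rigor here, because the citation route is actually shaky in this degenerate case: when every $M_i$ is simple one has $J(M)=(0)$, so the center described in Theorem \ref{t2.4}, namely $\{K\subseteq J(M)\mid K\neq(0)\}$, is empty, and the case analysis there (built on $J_i^{n_i-1}$) does not literally apply; your direct argument is what actually establishes that the center is $\{D_1,\dots,D_n\}$ and cleanly separates the radius-$1$ case $n=2$ from the radius-$2$ case $n\geq 3$. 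The one point you flag but should nail down is the structural claim that every submodule of $M$ has the form $N_S$: this requires the simple summands to have pairwise distinct annihilators $\mathfrak{m}_i$ (equivalently, to be pairwise non-isomorphic), since otherwise diagonal submodules appear. This is in fact forced by the paper's standing assumption that $M$ is not a vertex of $AG(M)$ (if $Ann(M_1)=Ann(M_2)$ then $(M_1\oplus(0)\oplus\cdots\oplus(0):M)=Ann(M)$ and $M$ would be a vertex), and with distinct maximal annihilators the comaximality you invoke gives both the decomposition of submodules and the identity $N_SN_T=N_{S\cap T}$. With that observation made explicit, your proof is complete.
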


\begin{thm}\label{t2.6}
Let $M=\oplus _{i=1}^n M_i$, where $M_i$ is a f.g Artinian local
module for all $1\leq i\leq n$ and $n\geq 2$. Then
$\gamma(AG(M))=n$.
\end{thm}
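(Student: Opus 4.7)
The plan is to prove $\gamma(AG(M))\le n$ and $\gamma(AG(M))\ge n$ separately. For the upper bound I simply appeal to the case analysis inside the proof of Theorem~\ref{t2.4}, which already shows that every non-zero submodule $K$ of $M$ is adjacent to some $D_i=(0)\oplus\cdots\oplus J_i^{n_i-1}\oplus\cdots\oplus(0)$. Consequently $\{D_1,\ldots,D_n\}$ is a dominating set of $AG(M)$ of cardinality $n$, so $\gamma(AG(M))\le n$.

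For the lower bound the plan is to exhibit $n$ vertices whose closed neighborhoods in $AG(M)$ are pairwise disjoint; the natural candidates are the $n$ maximal submodules $N_i=M_1\oplus\cdots\oplus J_i\oplus\cdots\oplus M_n$. The first key step is to identify exactly which vertices dominate $N_i$. Since $M_k$ is finitely generated with a unique maximal submodule, any element outside that maximal submodule generates $M_k$, so $M_k$ is necessarily cyclic over $R_k$; consequently $(L_k:M_k)M_k=L_k$ for every submodule $L_k\le M_k$. Expanding the adjacency condition $L\cdot N_i=(0)$ coordinate by coordinate using Proposition~\ref{p1.1}(d) then forces $L_k=0$ for every $k\ne i$ together with $L_iJ_i=(0)$, so every vertex $L$ adjacent to $N_i$ is supported entirely in the $i$-th coordinate, i.e.\ lies in $F_i:=(0)\oplus\cdots\oplus M_i\oplus\cdots\oplus(0)$.

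For $i\ne j$ the closed neighborhoods $N[N_i]$ and $N[N_j]$ are then disjoint: $N_i\ne N_j$; $N_i\not\subseteq F_j$ because $N_i$ has nonzero component $M_k$ in every coordinate $k\ne i$ (the case $n=2$ using $J_i\ne(0)$); and $F_i\cap F_j=(0)$ contains no vertex. Therefore any dominating set $S$ of $AG(M)$ must contain at least one element from each of these $n$ pairwise disjoint closed neighborhoods, giving $|S|\ge n$ and hence $\gamma(AG(M))=n$. The main obstacle is the componentwise analysis of $L\cdot N_i$ together with the cyclicity argument for $M_k$; once those are in place, the disjointness verification is routine bookkeeping.
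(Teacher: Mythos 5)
Your proposal is correct and follows essentially the same route as the paper: the upper bound uses the identical dominating set $\{D_1,\ldots,D_n\}$ from Theorem~\ref{t2.4}, and the lower bound is obtained from the maximal submodules $N_1,\ldots,N_n$. The difference is one of rigor rather than strategy: the paper's lower bound is the bare assertion that any set of fewer than $n$ vertices fails to dominate some $N\in Max(M)$, whereas you actually justify it by showing (via the cyclicity of each $M_k$ and Proposition~\ref{p1.1}(d)) that every neighbor of $N_i$ is supported in the $i$-th coordinate, so the closed neighborhoods $N[N_1],\ldots,N[N_n]$ are pairwise disjoint. One caveat your parenthetical ``$(J_i\neq(0))$'' quietly papers over: when $n=2$ and both $M_1,M_2$ are simple, $N_1=0\oplus M_2$ and $N_2=M_1\oplus 0$ are adjacent, the closed neighborhoods meet, and in fact $AG(M)$ is a single edge with $\gamma(AG(M))=1$, so the theorem as stated fails there; the paper itself implicitly concedes this case in the discussion after Corollary~\ref{c2.7} and excludes it in Corollary~\ref{c2.8} and Theorem~\ref{t3.6}, so this is a defect of the statement shared by both proofs, not a flaw peculiar to yours.
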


\begin{proof}
Let $N_i$ be the unique maximal submodule in $M_i$ with nilpotency
$n_i$. Let $\Omega= \{D_1, D_2, \ldots ,D_n\}$, where $D_i = (0)
\oplus \ldots \oplus (0) \oplus J_i^{n_i -1}\oplus (0)\oplus
\ldots \oplus (0)$ for $1\leq i\leq n$. Note that any non-zero
submodule in $M$ is adjacent to $D_i$ for some $i$. Therefore
$N[\Omega] = V(AG(M))$, $\Omega$
 is a dominating set of $AG(M)$ and so $\gamma(AG(M))\leq n$.
 Suppose $S$ is a dominating set of $AG(M)$ with $|S| < n$.
Then there exists $N\in Max(M)$ such that $NK\neq (0)$ for all
$K\in S$, a contradiction. Hence $\gamma(AG(M))=n$. \end{proof}

In view of Theorem \ref{t2.6}, we have the following corollary.

\begin{cor}\label{c2.7}
Let $M=\oplus _{i=1}^n M_i$, where $M_i$ is a f.g Artinian local
module for all $1\leq i\leq n$ and $n\geq 2$. Then

\begin{itemize}
\item [(a)] $ir(AG(M))=n$.

\item [(b)] $\gamma_c(AG(M))=n$.

\item [(c)] $\gamma_t(AG(M))=n$.

\item [(d)] $\gamma_{cl}(AG(M))=n$.

\item [(e)] $\gamma_{pr}(AG(M))=n$, if $n$ is even and
$\gamma_{pr}(AG(M))=n+1$, if $n$ is odd.

 \end{itemize}
\end{cor}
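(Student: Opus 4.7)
The plan is to build everything on top of the explicit $\gamma$-set $\Omega=\{D_1,\ldots,D_n\}$ constructed in the proof of Theorem \ref{t2.6}, where $D_i=(0)\oplus\cdots\oplus J_i^{n_i-1}\oplus\cdots\oplus(0)$. The first key observation is that $\Omega$ is a \emph{clique} in $AG(M)$: for $i\neq j$ the submodules $D_i$ and $D_j$ sit in disjoint direct summands of $M$, so Proposition \ref{p1.1}(d) forces $D_iD_j=(0)$, i.e.\ $D_i$ is adjacent to $D_j$. Since $n\geq 2$, this yields an induced copy of $K_n$ on $\Omega$.

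From this, parts (b), (c) and (d) follow essentially at once. The induced $K_n$ on $\Omega$ is connected and complete, and because $n\geq 2$ every $D_i$ has a neighbour inside $\Omega$, so $\Omega$ is simultaneously a connected, a total and a clique dominating set, giving $\gamma_c(AG(M)),\gamma_t(AG(M)),\gamma_{cl}(AG(M))\leq n$. The matching lower bounds are forced by the universal chain $\gamma_c,\gamma_t,\gamma_{cl}\geq\gamma$ together with $\gamma(AG(M))=n$ from Theorem \ref{t2.6}.

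For part (e), the parity of $n$ is decisive. If $n$ is even the clique $K_n$ on $\Omega$ already carries a perfect matching, so $\Omega$ itself is a paired-dominating set, and together with the trivial bound $\gamma_{pr}\geq\gamma=n$ we get $\gamma_{pr}(AG(M))=n$. If $n$ is odd, $\Omega$ cannot host a perfect matching, and one enlarges it by a single extra vertex $K\notin\Omega$ adjacent to some $D_j\in\Omega$; pairing $K$ with $D_j$ and perfectly matching the remaining $n-1$ elements of $\Omega$ inside the clique produces a paired-dominating set of size $n+1$, so $\gamma_{pr}(AG(M))\leq n+1$. The matching lower bound is a parity observation: any paired-dominating set has even cardinality and must dominate, so its size is an even integer at least $n$, which forces $\gamma_{pr}(AG(M))\geq n+1$ when $n$ is odd.

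Part (a) is the most delicate. The inequality $ir(AG(M))\leq n$ is immediate from Proposition \ref{p1.9}: the $\gamma$-set $\Omega$ is a minimal dominating set, hence a maximal irredundant set, of cardinality $n$. The reverse inequality $ir(AG(M))\geq n$ does not come for free from the general chain $ir\leq\gamma$, and is the main technical obstacle. The plan here is to argue directly that no maximal irredundant set $S$ of $AG(M)$ can have $|S|<n$: assuming such an $S$, the counting in Theorem \ref{t2.6} produces a maximal submodule $N_i$ not dominated by $S$, and maximality of $S$ forces, for every missed $N_i$, an element $v_i\in S$ with $P_N(v_i,S)\subseteq N(N_i)$. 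Since every neighbour of $N_i$ in $AG(M)$ is supported in the single summand $M_i$, the same $v$ cannot play the role of $v_i$ for two distinct missed $N_i$'s (otherwise its private neighbours would be supported in two disjoint summands and hence vanish), so the missed $N_i$'s inject into $S$; combining this with a sharpened version of the dominator-count from Theorem \ref{t2.6} forces $|S|\geq n$, the desired contradiction.
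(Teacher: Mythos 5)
Your handling of parts (b)--(e) is correct and is essentially the paper's own argument: the paper likewise notes that $\langle\Omega\rangle$ is a complete subgraph and reads off $\gamma_c=\gamma_t=\gamma_{cl}=n$ and the two paired-domination values, with the lower bounds coming from $\gamma_c,\gamma_t,\gamma_{cl},\gamma_{pr}\geq\gamma=n$ (Theorem \ref{t2.6}). Your explicit parity argument for $\gamma_{pr}(AG(M))\geq n+1$ when $n$ is odd (a paired-dominating set has even cardinality and is a dominating set) is a detail the paper leaves unstated, and it is right.

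Part (a) is where the genuine problem lies. You are correct that Proposition \ref{p1.9} only yields $ir(AG(M))\leq n$, since every minimal dominating set is a maximal irredundant set and hence $ir(G)\leq\gamma(G)$ in any graph; a separate lower bound is needed. (The paper's own proof does not supply one either: it asserts that $\Omega$ is a maximal irredundant set ``with minimum cardinality'' without justification, so you have correctly located a hole that the paper glosses over.) However, your proposed repair does not close it. The injection you describe is sound as far as it goes: if $S$ is maximal irredundant and $N_i\notin N[S]$, then adding $N_i$ destroys irredundance, which forces some $v_i\in S$ with $\emptyset\neq P_N(v_i,S)\subseteq N[N_i]$; since $N[N_i]\cap N[N_j]=\emptyset$ for $i\neq j$ (neighbours of $N_i$ are supported in the single summand $M_i$, and $N_iN_j\neq(0)$), distinct missed maximal submodules require distinct $v_i$. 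But this only counts the \emph{missed} $N_i$. Because each vertex of $AG(M)$ lies in the closed neighbourhood of at most one maximal submodule, a set $S$ with $|S|=k$ dominates at most $k$ of the $n$ maximal submodules, so at least $n-k$ are missed, and your injection gives $k\geq n-k$, i.e. $|S|\geq n/2$. That is precisely the general bound $ir(G)>\gamma(G)/2$ and falls a factor of two short of $|S|\geq n$. The ``sharpened version of the dominator-count'' that is supposed to bridge this factor is never stated, and it is the entire content of the claim: one would need, for instance, to show that no element of $S$ can simultaneously serve as the $v_i$ for a missed $N_i$ and dominate some other $N_j$ (so that the $v_i$'s and the dominators are disjoint inside $S$), and this is not automatic --- a vertex supported in summand $j$ and adjacent to $N_j$ can perfectly well have private neighbours supported in summand $i$. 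As written, part (a) is not proved.
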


\begin{proof}

Consider the $\gamma$-set of $AG(M)$ identified in the proof of
Theorem \ref{t2.6}. By Proposition \ref{p1.9}, $\Omega$
 is a maximal irredundant set with minimum
cardinality and so $ir(AG(M))=n$. Clearly $<\Omega>$ is a complete
subgraph of $AG(M)$. Hence
$\gamma_c(AG(M))=\gamma_t(AG(M))=\gamma_{cl}(AG(M))=n$. If $n$ is
even, then $<\Omega>$ has a perfect matching and so $\Omega$
 is a paired dominating set of $AG(M)$. Thus $pr(AG(M)) = n$. If $n$ is odd, then
 $<\Omega \cup K>$ has a perfect matching for some $K\in V(AG(M))\setminus \Omega$.
 and so  $\Omega \cup {K}$ is a paired dominating set of $AG(M)$. Thus $\gamma_{pr}(AG(M))=n$ if
$n$ even and $\gamma_{pr}(AG(M))=n+1$ if $n$ is odd.

\end{proof}

Let $M=\oplus _{i=1}^n M_i$, where $M_i$ is a f.g Artinian local
module for all $1\leq i\leq n$ and $n\geq 2$. Then by Theorem
\ref{t2.4}, radius of $AG(M)$ is $2$. Further, by Theorem
\ref{t2.6}, the domination number of $AG(M)$ is equal to $n$,
where $n$ is the number of distinct maximal submodules of $M$.
However, this need not be true if the radius of $AG(M)$ is $1$.
For, consider the ring $M = M_1 \oplus M_2$, where $M_1$ and $M_2$
are simple modules. Then $AG(M)$ is a star graph and so has radius
$1$, whereas $M$ has two distinct maximal submodules. The
following corollary shows that a more precise relationship between
the domination number of $AG(M)$ and the number of maximal
submodules in $M$, when $M$ is finite.

\begin{cor}\label{c2.8}
Let $M$ be a finite module and $\gamma((AG(M))) = n$. Then either
$M = M_1 \oplus M_2$, where $M_1$, $M_2$ are simple modules or $M$
has n maximal submodules.
\end{cor}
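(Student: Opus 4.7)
The plan is to use the finiteness of $M$ to realize $M$ as a direct sum $M=M_1\oplus\cdots\oplus M_k$ of f.g.\ Artinian local modules, compute $\gamma(AG(M))$ via Theorem \ref{t2.6} (or Corollary \ref{c2.3} if $k=1$), and absorb the single degenerate configuration in which that formula miscounts into the first alternative of the dichotomy.

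Since $|M|<\infty$, the faithful quotient $\bar R:=R/Ann(M)$ is a finite (hence Artinian) commutative ring. Lemma \ref{l1.2}, combined with idempotent lifting via Lemma \ref{l1.3} (valid because the nilradical of a finite commutative ring is nil), produces a decomposition $\bar R\cong R_1\times\cdots\times R_k$ with each $R_i$ a finite local ring, which Proposition \ref{p1.1} transports into an $R$-module decomposition $M=M_1\oplus\cdots\oplus M_k$ with each $M_i$ a faithful $R_i$-module. I would then argue that the standing hypothesis that $M$ is not a vertex of $AG(M)$ forces each $M_i$ to be cyclic: if some $M_i$ were to split off a faithful direct summand properly (i.e., $M_i=R_i\oplus M_i'$ with $M_i'\neq 0$), then $N:=0\oplus\cdots\oplus M_i'\oplus\cdots\oplus 0$ would be a nonzero proper submodule of $M$ with $(N:M)=Ann(R_i)=0=Ann(M)$, turning $M$ into a vertex of $AG(M)$ and yielding a contradiction. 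Since $M_i$ is then cyclic and faithful over $R_i$, we have $M_i\cong R_i$, and in particular $M_i$ is f.g.\ Artinian local with unique maximal submodule $\mathfrak{m}_i\subset R_i$.

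Under the decomposition $M=R_1\oplus\cdots\oplus R_k$, the maximal submodules of $M$ are exactly the $k$ submodules of the form $R_1\oplus\cdots\oplus\mathfrak{m}_i\oplus\cdots\oplus R_k$, so $M$ has exactly $k$ maximal submodules. If $k=1$, Corollary \ref{c2.3}(a) gives $n=\gamma(AG(M))=1$, matching the single maximal submodule. If $k\geq 2$ and we are not in the exceptional configuration $k=2$ with both $R_1,R_2$ fields, Theorem \ref{t2.6} produces $n=\gamma(AG(M))=k$, again equal to the number of maximal submodules; this is the second alternative of the dichotomy.

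In the one remaining configuration, $k=2$ with both $R_1,R_2$ fields, we have $M=R_1\oplus R_2$ presented as a direct sum of two simple modules, placing us squarely in the first alternative. As remarked in the paragraph immediately preceding the statement, here $AG(M)\cong K_2$ is a star graph, so $n=1$ is strictly less than the two maximal submodules of $M$. The main obstacle will be justifying the cyclicity of each $M_i$; this is the single place where the standing non-vertex hypothesis on $M$ does real work, and once it is established the rest is a routine application of the earlier results of Section 2.
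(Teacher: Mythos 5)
Your overall route is the same as the paper's: decompose $M$ as a direct sum of f.g.\ Artinian local modules, invoke Theorem \ref{t2.6} (resp.\ Corollary \ref{c2.3}) to identify $\gamma(AG(M))$ with the number of factors, and set aside the configuration $M=M_1\oplus M_2$ with both summands simple, where Theorem \ref{t2.6} miscounts. The paper simply asserts the decomposition (``Hence $M=\oplus_{i=1}^m M_i$, where $M_i$ is a f.g.\ Artinian local module'') and disposes of the case $\gamma(AG(M))=1$ by citing \cite[Corollary 2.12]{ah16}; you instead try to derive the locality of the factors from the standing non-vertex hypothesis. That is the right thing to want, but your derivation has a genuine gap: what you actually prove is the contrapositive of ``if $M_i=R_i\oplus M_i'$ with $M_i'\neq 0$, then $M$ is a vertex of $AG(M)$.'' This only rules out $M_i$ having a free (equivalently, faithful cyclic) direct summand with nonzero complement; it does not rule out $M_i$ being non-cyclic, since ``does not split off a copy of $R_i$'' and ``cyclic'' are not complementary conditions for modules over a finite local ring. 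The sentence ``Since $M_i$ is then cyclic\ldots'' is therefore a non sequitur.

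Worse, the claim you need cannot be repaired. Take $R=\mathbb{F}_2[x,y]/(x^2,xy,y^2)$ and let $M$ be the $8$-element module with $\mathbb{F}_2$-basis $e_1,e_2,f$, where $xe_1=ye_2=f$ and $xe_2=ye_1=xf=yf=0$. Then $M$ is faithful, indecomposable and non-cyclic; every nonzero submodule contains $f$, so every nonzero proper submodule $N$ satisfies $(N:M)=(x,y)\neq (0)=Ann(M)$, and in particular $M$ is not a vertex of $AG(M)$. Here $AG(M)$ is a complete graph on the four nonzero proper submodules, so $\gamma(AG(M))=1$, yet $M$ is not a sum of two simple modules and has three maximal submodules. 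So no argument can force each $M_i$ to be local under the stated hypotheses, and the corollary itself fails in the $\gamma=1$ case (which the paper delegates to an external reference rather than to the decomposition). Your handling of the cases $k\geq 2$, including the explicit carve-out of the two-fields configuration that Theorem \ref{t2.6} silently excludes, is correct and matches the paper.
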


\begin{proof}
When $\gamma((AG(M))) = 1$, proof follows from \cite[Corollary
2.12]{ah16}. When $\gamma((AG(M))) = n$, then $M$ cannot be $M =
M_1 \oplus M_2$, where $M_1$, $M_2$ are simple modules. Hence
$M=\oplus _{i=1}^m M_i$, where $M_i$ is a f.g Artinian local
module for all $1\leq i\leq m$ and $m\geq 2$. By Theorem
\ref{t2.6}, $\gamma((AG(M))) = m$. Hence by assumption $m = n$.
i.e., $M=\oplus _{i=1}^n M_i$, where $M_i$ is a f.g Artinian local
module for all $1\leq i\leq n$ and $n\geq 2$. One can see now that
$M$ has $n$ maximal submodules.
\end{proof}

\section{The relationship between $\gamma_t((AG(M)))$ and $\gamma((AG(M)))$}

The main goal in this section is to study the relation between
$\gamma_t((AG(M)))$ and $\gamma((AG(M))) $.

\begin{thm}\label{t3.1} Let $M$ be a module. Then

$\gamma_t((AG(M)))= \gamma((AG(M)))$ or $\gamma_t((AG(M)))=
\gamma((AG(M)))+1$.
 \end{thm}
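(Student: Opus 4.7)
The first inequality $\gamma(AG(M)) \leq \gamma_t(AG(M))$ is immediate since every total dominating set is in particular a dominating set; the substance of the theorem is the upper bound $\gamma_t(AG(M)) \leq \gamma(AG(M)) + 1$.

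My plan is to start with a minimum dominating set $D = \{N_1, \ldots, N_k\}$ of $AG(M)$ and show that it can be turned into a total dominating set by adding at most one vertex. If $D$ is already total dominating (every $N_i$ has a neighbor inside $D$) then $\gamma_t(AG(M)) \leq k = \gamma(AG(M))$, yielding equality. Otherwise let $S = \{N \in D : N \text{ has no neighbor in } D\} \neq \emptyset$; I aim to exhibit a single vertex $w \in V(AG(M)) \setminus D$ adjacent in $AG(M)$ to every $N \in S$. Then $D \cup \{w\}$ is a total dominating set of size $\gamma(AG(M)) + 1$: each $N \in S$ is dominated by $w$, each $N \in D \setminus S$ already has a neighbor in $D$, and $w$ itself has any member of $S$ as a neighbor.

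To find $w$, I would unfold $wN = (w{:}M)(N{:}M)M$ and observe that the joint condition ``$wN = (0)$ for every $N \in S$'' is equivalent to $(w{:}M) \cdot L = (0)$, where $L = \sum_{N \in S}(N{:}M)M$. I then have to check that $L$ is a nonzero proper submodule. For \emph{nonzero}: if $(N{:}M)M = (0)$ for some $N \in S$, then $(N{:}M) \subseteq \mathrm{Ann}(M)$, so $N$ would annihilate every other submodule and hence be adjacent to every other vertex, contradicting $N \in S$ when $k \geq 2$ (the case $k = 1$ is handled directly from the definition of a vertex of $AG(M)$). For \emph{proper}: if $L = M$, then any candidate $w$ must satisfy $(w{:}M)M = (0)$, i.e.\ $(w{:}M) = \mathrm{Ann}(M)$, which by the characterization of vertices of $AG(M)$ recalled just before Remark~\ref{r2.1} would make $M$ a vertex, contradicting the standing assumption. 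With $0 < L < M$ secured, I would produce $w$ as a nonzero proper submodule with $(w{:}M) \subseteq \mathrm{Ann}_R(L)$; this $w$ is the desired common neighbor of $S$.

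The main obstacle I expect is the borderline case where $0 < L < M$ but $\mathrm{Ann}_R(L) = \mathrm{Ann}(M)$, so that the only ideal available for $(w{:}M)$ is $\mathrm{Ann}(M)$ itself and no admissible $w$ exists. To bypass this, I would invoke Proposition~\ref{p1.9}: the $\gamma$-set $D$ is a maximal irredundant set, so every $N_i \in D$ has a private neighbor $p(N_i)$. By swapping a troublesome $N \in S$ for $p(N) \in V(AG(M)) \setminus D$, I obtain another $\gamma$-set whose corresponding set $S'$ is strictly smaller; iterating either terminates in a $\gamma$-set that is total dominating or in one for which the common-neighbor construction above succeeds, in either case giving $\gamma_t(AG(M)) \leq \gamma(AG(M)) + 1$.
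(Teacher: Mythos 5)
Your reduction of ``$w$ is a common neighbour of every $N\in S$'' to the single condition $(w{:}M)\,L=(0)$ with $L=\sum_{N\in S}(N{:}M)M$ is fine, but the proof then hinges on actually producing such a vertex $w$, and this is where it breaks down: a common neighbour of $S$ need not exist, and your argument for the case $L=M$ only shows that \emph{if} $L=M$ then no candidate $w$ exists --- it does not show that $L\neq M$. Concretely, take $M=R=F_1\times F_2\times F_3$ a product of three fields and $D=\{p_1,p_2,p_3\}$ with $p_1=0\times F_2\times F_3$, etc. Then $D$ is a $\gamma$-set (the six nonzero proper ideals are exactly the vertices, and each axis $F_i\times 0\times 0$ is adjacent to $p_i$), no two $p_i$ are adjacent, so $S=D$, and $L=p_1+p_2+p_3=M$; indeed any $w$ adjacent to all three $p_i$ would lie in $\bigcap_i\mathrm{Ann}(p_i)=(0)$. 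So for this $\gamma$-set the single added vertex cannot exist, even though the conclusion of the theorem holds here ($\gamma_t=\gamma=3$, witnessed by the three axes).

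The fallback via private neighbours does not repair this. For $N\in S$ the private neighbourhood $P_N(N,D)$ always contains $N$ itself, and any private neighbour $p(N)\neq N$ is by definition adjacent to no vertex of $D\setminus\{N\}$; hence after the swap $p(N)$ has no neighbour in the new set and every element of $S\setminus\{N\}$ still has none, so $|S'|\geq |S|$ rather than $|S'|<|S|$ (and the swapped set need not even remain dominating, since vertices privately dominated by $N$ may be lost). The paper's proof avoids all of this by modifying $D$ instead of merely augmenting it: it chooses a maximal subfamily $L_1,\dots,L_k$ of $D$ with nonzero product $P=\sqcap_{i=1}^{k}L_i$ and replaces that subfamily by $\{P,\mathrm{ann}\,L_1,\dots,\mathrm{ann}\,L_k\}$, using $P$ as a hub adjacent to each $\mathrm{ann}\,L_i$ and the maximality of $k$ to keep the remaining vertices dominated. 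To salvage your approach you would at minimum need to prove that \emph{some} $\gamma$-set admits a common neighbour for its isolated part, which is essentially a different argument.
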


\begin{proof}
Let $\gamma_t((AG(M)))\neq \gamma((AG(M)))$ and $D$ be a
$\gamma$-set of $AG(M)$. If  $\gamma((AG(M)))=1$, then it is clear
that $\gamma_t((AG(M)))=2$. So let $\gamma((AG(M)))> 1$ and put $k
= Max \{n|$there exist $L_1, \ldots , L_n \in D$ ; $\sqcap_{i=1}
^n L_i \neq 0 \}$. Since $\gamma_t((AG(M)))\neq \gamma((AG(M)))$,
we have $k \geq 2$. Let $L_1, \ldots , L_k \in D$ be such that
$\sqcap_{i=1} ^k L_i \neq 0$. Then $S = \{\sqcap_{i=1} ^k L_i, ann
L_1, \ldots , ann L_k \}\cup D\setminus \{L_1, \ldots , L_k \}$
is a $\gamma_t$-set. Hence $\gamma_t((AG(M)))= \gamma((AG(M)))+1$.

\end{proof}

In the following result we find the total domination number of
$AG(M)$.

 \begin{thm}\label{t3.2} Let $S$ be the set of all maximal elements of
the set $V(AG(M))$. If $|S| > 1$, then $\gamma_t((AG(M))) = |S|$.
\end{thm}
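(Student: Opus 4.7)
My plan is to prove $\gamma_t(AG(M)) = |S|$ by establishing matching upper and lower bounds.

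For the upper bound $\gamma_t(AG(M)) \le |S|$, I would exhibit the explicit total dominating set $D := \{ann(N) : N \in S\}$. The key identity $(ann(N):M) = (Ann(M) : (N:M))$ gives $(N:M)(ann(N):M) \subseteq Ann(M)$ from the definition of ideal quotient, hence $N \cdot ann(N) = 0$; combined with the standing hypothesis $(N:M) \neq Ann(M)$, this shows $ann(N)$ is a nonzero proper vertex of $AG(M)$ adjacent to $N$. For any other vertex $K$, I would pick a maximal vertex $N \in S$ with $K \subseteq N$; the inclusion $(K:M) \subseteq (N:M)$ then yields $K \cdot ann(N) \subseteq N \cdot ann(N) = 0$, so $K$ is dominated by $ann(N) \in D$. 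To secure the total-domination condition at each element of $D$, I would use $|S| > 1$: the vertex $ann(N)$ sits inside some maximal vertex $N' \in S$ (and I would argue that a choice of $N' \neq N$ is available because $ann(N)$ need not lie inside $N$), and then $ann(N) \cdot ann(N') \subseteq N' \cdot ann(N') = 0$.

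For the lower bound $\gamma_t(AG(M)) \ge |S|$, I would take an arbitrary total dominating set $D$, pick for each $N \in S$ a neighbor $d_N \in D$ with $N \cdot d_N = 0$, and show the assignment $N \mapsto d_N$ is injective. Assume for contradiction that $d_N = d_{N'} = d$ for two distinct elements $N, N' \in S$. Setting $I := (N:M) + (N':M)$, the containments $(N:M)(d:M), (N':M)(d:M) \subseteq Ann(M)$ combine to give $I(d:M) \subseteq Ann(M)$. Letting $L := IM$, a direct verification shows $L \cdot d = 0$: any $r \in (L:M)$ satisfies $rM \subseteq L = IM$, so $r(d:M) M \subseteq I(d:M) M = 0$. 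Hence $L$ is a vertex of $AG(M)$ annihilated by $d$, and the plan is to use this to exhibit a vertex strictly containing $N$, contradicting $N \in S$.

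The hard part will be completing this maximality step. The obstacle is that $L = (N:M)M + (N':M)M$ need not contain $N$ itself (only the submodule $(N:M)M \subseteq N$), and the colon operation does not distribute over submodule sums: generically $(A + B : M) \supsetneq (A:M) + (B:M)$, so from $N \cdot d = N' \cdot d = 0$ one cannot directly conclude $(N + N') \cdot d = 0$ or $(N + L) \cdot d = 0$. The plan to navigate this is to leverage the paper's standing hypothesis that $M$ is not a vertex of $AG(M)$, which forces $(K:M) \supsetneq Ann(M)$ for every nonzero proper submodule $K$, and then to analyze the ideal $(N + L : M)$ directly, arguing that $N + L$ remains a vertex of $AG(M)$ strictly containing $N$; the edge case $L = M$ is ruled out automatically, since $M \cdot d = (d:M) M \neq 0$ whenever $d$ is a vertex in $AG(M)^* = AG(M)$.
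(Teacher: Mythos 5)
Your overall architecture---an explicit total dominating set indexed by $S$ for the upper bound, and injectivity of a ``dominator'' assignment $N\mapsto d_N$ for the lower bound---is the same as the paper's, but both halves of your sketch stall exactly where the paper invokes a tool you never introduce: a double-annihilator identity for maximal vertices. The paper first shows that every $K\in S$ satisfies $K=ann(ann\,K)=ann(m_K)$ for some $0\neq m_K\in ann\,K$ (maximality collapses the chain $K\subseteq ann(ann\,K)\subseteq ann(m_K)$), and then takes $D=\{Rm_K : K\in S\}$. Pairwise adjacency inside $D$ is then immediate: a nonzero $m\in (Rm_K)(Rm_{K'})$ would force $K=ann(m)=K'$. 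Your candidate set $D=\{ann(N):N\in S\}$ does dominate every vertex for the reason you give, but your justification of the \emph{total} condition---``a choice of $N'\neq N$ is available because $ann(N)$ need not lie inside $N$''---is not an argument: nothing you say rules out that $N$ is the only maximal vertex containing $ann(N)$, and you would additionally need $ann(N)\neq ann(N')$ for adjacency to make sense. Without something playing the role of $N=ann(ann\,N)$, this step cannot be closed.

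The lower bound is where the gap is fatal rather than merely unfinished. You correctly diagnose that your sum construction fails because $(N+L:M)$ may strictly contain $(N:M)+(L:M)$, so adjacency of $N$ and $N'$ to a common $d$ does not transfer to $N+L$ or $N+N'$; but you then only ``plan to navigate this,'' and that navigation is the entire content of the lower bound. The paper avoids sums altogether: if a vertex $L'$ of a $\gamma_t$-set were adjacent to two distinct $K,K'\in S$, maximality forces $K=ann\,L'=K'$, a contradiction; hence each element of a total dominating set is adjacent to at most one element of $S$, and since total domination requires every element of $S$ to have a neighbor in the set, any $\gamma_t$-set has at least $|S|$ elements. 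The missing ingredient in your proposal is precisely this short maximality argument through $ann\,L'$; until it (or a substitute) is supplied, neither of your two bounds is actually proved.
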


\begin{proof}
Let $S$ be the set of all maximal elements of the set $V(AG(M))$,
$K\in S$ and $|S| > 1$. First we show that $K = ann(ann K)$ and
there exists $m\in M$ such that $K = ann(m)$. Let $K\in S$. Then
$ann K\neq 0$ and so there exists $0\neq m\in ann K$. Hence
$K\subseteq ann(ann K)\subseteq ann(m)$. Thus by the maximality of
$K$, we have $K = ann(ann K) = ann(m)$. By Zorn' Lemma it is clear
that if $V(AG(M))\neq \emptyset$, then $S\neq \emptyset$. For any
$K\in S$ choose $m_K \in M$ such that $K = ann(m_K)$. We assert
that $D = \{Rm_K | K\in S\}$ is a total dominating set of $AG(M)$.
Since for every $L\in V(AG(M))$ there exists $K\in S$ such that
$L\subseteq K = ann(m_K)$, $L$ and $Rm_K$ are adjacent. Also for
each pair $K, K'\in S$, we have $(Rm_K)(Rm_{K'}) = 0$. Namely, if
there exists $m\in (Rm_K)(Rm_{K'})\setminus \{0\}$, then $K = K' =
ann(m)$. Thus $\gamma_t((AG(M)))\leq |S|$. To complete the proof,
we show that each element of an arbitrary $\gamma_t$-set of
$AG(M)$ is adjacent to exactly one element of $S$. Assume to the
contrary, that a vertex $L'$ of a $\gamma_t$-set of $AG(M)$ is
adjacent to $K$ and $K'$, for $K, K' \in S$. Thus $K = K' = ann
L'$, which is impossible. Therefore $\gamma_t((AG(M))) = |S|$.
\end{proof}

\begin{thm}\label{t3.3} Let $R$ be a reduced ring, $M$ is a faithful module,
 and $|Min(R)| < \infty$. If $\gamma((AG(M)))> 1$, then $\gamma_t((AG(M)))= \gamma((AG(M)))= |Min(R)|$.
\end{thm}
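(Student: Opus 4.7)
The plan is to sandwich $\gamma(AG(M)) = \gamma_t(AG(M)) = n$, where $n = |Min(R)|$, by constructing an explicit clique dominating set of size $n$ and then showing any dominating set needs at least $n$ elements. Let $P_1,\ldots,P_n$ be the minimal primes, and set $A_i = \bigcap_{j\neq i} P_j$. Since $R$ is reduced with finite $Min(R)$, $\bigcap_k P_k = 0$, so $A_iA_j = 0$ for $i\neq j$ and $P_iA_i = 0$; minimality gives $A_i\not\subseteq P_i$, hence $A_i\neq 0$. The hypothesis $\gamma>1$ also ensures $AG(M) = AG(M)^*$ and $(L:M)\neq 0$ for every vertex $L$ (else $L$ would be adjacent to every vertex, forcing $\gamma = 1$); consequently, any witness $K$ in the vertex condition $LK=0$ is itself a vertex with $(K:M)\neq 0$.

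For the upper bound, set $N_i = A_iM$. Faithfulness gives $N_i\neq 0$, and the computation $rsM\subseteq A_j(rM)\subseteq A_jA_iM=0$ for $r\in(N_i:M)$, $s\in(N_j:M)$ yields $N_iN_j=0$ when $i\neq j$; the analogous $rpM\subseteq pA_jM\subseteq P_jA_jM=0$ for $r\in(N_j:M)$, $p\in P_j$ gives $(N_j:M)P_j=0$. Taking $r=1$ when $N_i = M$ kills $(N_j:M)$ and then $A_j$, excluding $N_i = M$; the same calculation with $i=j$ together with reducedness forces $A_i = 0$ whenever $N_i = N_j$, so the $N_i$'s are distinct. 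Thus $\{N_1,\ldots,N_n\}$ is a clique of $n$ proper nonzero distinct vertices. Given any vertex $L$ with witness $K$, both have nonzero $(\cdot:M)$ and $(L:M)(K:M)=0$, so primality picks an index $j$ with $(K:M)\not\subseteq P_j$ and hence $(L:M)\subseteq P_j$; then $(L:M)(N_j:M)\subseteq P_j(N_j:M)=0$ makes $L$ adjacent to (or equal to) $N_j$. Since the clique has size $\geq 2$, it is also total dominating, giving $\gamma(AG(M)),\gamma_t(AG(M))\leq n$.

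For the lower bound, set $K_i = ann_M(A_i)$. Faithfulness gives $(K_i:M) = Ann_R(A_i)$, and the reduced-ring identity $Ann_R(A_i) = P_i$ (inclusion $\subseteq P_i$ from $rA_i\subseteq P_i$ and $A_i\not\subseteq P_i$; reverse from $P_iA_i=0$) yields $(K_i:M) = P_i$. Each $K_i$ is a proper nonzero vertex (it contains $N_j$ for $j\neq i$ and is adjacent to $N_i$ via $P_i(N_i:M)=0$), and the $K_i$'s are pairwise distinct. If $D$ is any dominating set, then for each $i$ either $K_i\in D$ or some $L\in D$ is adjacent to $K_i$, i.e., $(L:M)\subseteq Ann_R(P_i) = A_i$; choose $h(i)\in D$ accordingly. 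For $n\geq 3$, $h$ is injective: $K_i = K_j$ is excluded by distinctness; $L = K_i$ dominating $K_j$ would need $P_i\subseteq A_j$, hence $P_i\subseteq P_k$ for some $k\neq i,j$, violating minimality; one $L$ adjacent to both $K_i$ and $K_j$ forces $(L:M)\subseteq A_i\cap A_j = \bigcap_k P_k = 0$, contradicting $L$ being a vertex. The $n=2$ case is immediate from $\gamma>1$. Hence $|D|\geq n$, so $\gamma(AG(M))=n$, and $n\leq\gamma\leq\gamma_t\leq n$ yields $\gamma_t(AG(M))=n$.

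The main obstacle is the injectivity of $h$ in the lower bound: all four cross coincidences $h(i)=h(j)$ must be eliminated, and the sub-argument $P_i\not\subseteq A_j$ for $i\neq j$ genuinely needs $n\geq 3$, which is why the $n=2$ case must be split off (where it is handled for free by the hypothesis $\gamma>1$).
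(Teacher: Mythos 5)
Your proof is correct, and it follows the same overall strategy as the paper's: build $n=|Min(R)|$ distinguished vertices out of the minimal primes for the upper bound, and run a pigeonhole argument on $n$ "large" vertices to force any dominating set to have at least $n$ elements. The differences are in execution, and they make your version more self-contained. For the upper bound you exhibit the explicit clique $\{A_1M,\ldots,A_nM\}$ with $A_i=\bigcap_{j\neq i}P_j$ and verify directly that it totally dominates; the paper instead works with $p_iM$ and the products $p_1\cdots p_{i-1}p_{i+1}\cdots p_nM$, shows the $p_iM$ are the maximal elements of $V(AG(M))$, and then imports $\gamma_t=|S|$ from Theorem \ref{t3.2} (so its bound $\gamma\leq n$ comes via $\gamma\leq\gamma_t$ rather than from an explicit dominating set). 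For the lower bound the paper's pigeonhole is on the pairwise non-adjacent vertices $p_iM$ themselves, concluding that a shared dominator $J_l$ would satisfy $(J_l:M)^2=0$; yours is on $K_i=ann_M(A_i)$, which is slightly cleaner because $(K_i:M)=P_i$ exactly, and a shared dominator is killed by $(L:M)\subseteq A_i\cap A_j=0$. Finally, you dispose of $n=2$ trivially from the hypothesis $\gamma>1$, where the paper cites Corollary \ref{c1.8}. Your correct isolation of why $n\geq 3$ is needed in the injectivity step (namely $P_i\not\subseteq A_j$ fails when $n=2$) mirrors exactly why the paper also splits off $n=2$. One tiny point you leave implicit: $\gamma(AG(M))>1$ forces $|Min(R)|\geq 2$ in the first place (if $R$ were a domain, faithfulness and the standing assumption that $M$ is not a vertex would make $AG(M)$ empty), which you need before $A_i$ and $N_i=A_iM$ make sense as proper nonzero submodules; the paper records this as "$|Min(R)|>1$" at the outset.
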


 \begin{proof} Since $R$ is reduced, $M$ is a faithful module,
 and $\gamma((AG(M)))> 1$, we have $|Min(R)| > 1$. Suppose that
$Min(R) = \{p_1, \ldots , p_n \}$. If $n = 2$, the result follows
from Corollary \ref{c1.8}. Therefore, suppose that $n \geq 3$.
Define $\widehat{p_iM} = p_1 \ldots p_{i-1}p_{i+1} \ldots p_n M$,
for every $i = 1, \ldots , n$. Clearly, $\widehat{p_iM}\neq 0$,
for every $i = 1, \ldots , n$. Since $R$ is reduced, we deduce
that $\widehat{p_iM} p_iM=0$. Therefore, every $p_i M$ is a vertex
of $AG(M)$. If $K$ is a vertex of $AG(M)$, then by \cite[Corollary
3.5]{ati69}, $(K:M)\subseteq Z(R) = \cup_{i=1} ^n p_i$. It follows
from the Prime Avoidance Theorem that $(K:M)\subseteq p_i$, for
some $i$, $1\leq i \leq n$. Thus $p_iM$ is a maximal element of
$V(AG(M))$, for every $i = 1, \ldots , n$. From Theorem
\ref{t3.2}, $\gamma_t((AG(M)))= |Min(R)|$. Now, we show that $
\gamma((AG(M)))= n$. Assume to the contrary, that $B = \{J_1,
\ldots , J_{n-1}\}$ is a dominating set for $AG(M)$. Since $n \geq
3$, the submodules $p_iM$ and $p_jM$ , for $i \neq j$ are not
adjacent (from $p_ip_j = 0 \subseteq p_k$ it would follow that
$p_i\subseteq p_k$, or $p_j\subseteq p_k$ which is not true).
Because of that, we may assume that for some $k < n - 1$, $J_i =
p_iM$ for $i = 1,\ldots, k$, but none of the other of submodules
from $B$ are equal to some $p_sM$ (if $B = \{p_1M, \ldots ,
p_{n-1}M\}$, then $p_nM$ would be adjacent to some $p_iM$, for
$i\neq n$). So, every submodule in $\{p_{k+1}M, . . . , p_nM\}$ is
adjacent to a submodule in $\{J_{k+1}, . . . , J_{n-1}\}$. It
follows that for some $s\neq t$, there is an $l$ such that
$(p_sM)J_l= 0 = (p_tM)J_l$. Since $p_s\nsubseteq p_t$, it follows
that $J_l\subseteq p_tM$, so $J_l ^2 = 0$, which is impossible,
since the ring $R$ is reduced. So $\gamma_t((AG(M)))=
\gamma((AG(M)))= |Min(R)|$.
\end{proof}

Theorem \ref{t3.3} leads to the following corollary.

\begin{cor}\label{c3.4} Let $R$ be a reduced ring, $M$ is a faithful module,
 and $|Min(R)| < \infty$, then the following are
equivalent.

\begin{itemize}
\item [(a)] $\gamma(AG(M))=2$.

\item [(b)] $AG(M)$ is a bipartite graph with two nonempty parts.

\item [(c)] $AG(M)$ is a complete bipartite graph with two
nonempty parts.

\item [(d)]  $R$ has exactly two minimal primes.

 \end{itemize}
\end{cor}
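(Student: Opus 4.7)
The plan is to deduce Corollary~\ref{c3.4} directly from Theorem~\ref{t3.3} combined with Corollary~\ref{c1.8}. Since the standing convention of the paper is that $M$ is not a vertex of $AG(M)$, we have $AG(M) = AG(M)^{*}$; consequently the equivalences (b) $\Leftrightarrow$ (c) $\Leftrightarrow$ (d) are an immediate restatement of Corollary~\ref{c1.8} (which assumes $R$ reduced and $M$ faithful, exactly the hypotheses here).

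For the implication (a) $\Rightarrow$ (d), I would simply invoke Theorem~\ref{t3.3}: the assumption $\gamma(AG(M)) = 2 > 1$ activates its conclusion, yielding $|Min(R)| = \gamma(AG(M)) = 2$.

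For the implication (d) $\Rightarrow$ (a), I would again use Theorem~\ref{t3.3}, which under $|Min(R)| = 2$ gives $\gamma(AG(M)) = 2$ as soon as one knows $\gamma(AG(M)) > 1$. The remaining task is therefore to rule out $\gamma(AG(M)) = 1$. Here I would use (d) $\Rightarrow$ (c) (already proved) to see that $AG(M)$ is complete bipartite, with parts indexed by whether a submodule's annihilator lies in $p_1$ or in $p_2$ (this is essentially the Prime Avoidance argument reproduced in the proof of Theorem~\ref{t3.3}); then exhibit at least two distinct vertices on each side (e.g., $p_1 M$ together with a proper submodule whose annihilator still lies only in $p_1$, and symmetrically for $p_2$), so the graph is not a star and no single vertex dominates.

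The main obstacle is precisely this last step: showing that each bipartition class of $AG(M)$ contains more than one vertex, so that no element of one class is adjacent to every vertex of the other class as well as dominating its own class. This is where the hypotheses that $R$ is reduced (so $p_1 \cap p_2 = 0$ and $p_i M \neq 0$) and that $M$ is faithful (so annihilator conditions translate cleanly between $R$ and $M$) do the genuine work; modulo this, the rest of the corollary is a direct bookkeeping consequence of Theorem~\ref{t3.3} and Corollary~\ref{c1.8}.
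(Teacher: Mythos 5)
Your overall route is the same as the paper's: the published proof of Corollary~\ref{c3.4} is literally the one-line citation of Theorem~\ref{t3.3} and Corollary~\ref{c1.8}, and your reduction of (b)~$\Leftrightarrow$~(c)~$\Leftrightarrow$~(d) to Corollary~\ref{c1.8} (using $AG(M)=AG(M)^{*}$) and of (a)~$\Rightarrow$~(d) to Theorem~\ref{t3.3} is exactly what the authors intend. The problem is the step you yourself single out as the ``main obstacle'' in (d)~$\Rightarrow$~(a): Theorem~\ref{t3.3} only applies once $\gamma(AG(M))>1$ is known, so one must separately exclude $\gamma(AG(M))=1$. Your plan for doing this --- exhibiting at least two distinct vertices in each part of the bipartition so that the graph is not a star --- cannot be carried out in general. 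Take $R=M=\mathbb{Z}/6\mathbb{Z}$: here $R$ is reduced with exactly two minimal primes $(2)$ and $(3)$, $M$ is faithful and is not a vertex of $AG(M)$, yet $AG(M)$ consists of the single edge $(2)-(3)$, i.e.\ $K_{1,1}$, so $\gamma(AG(M))=1$ while (b), (c) and (d) all hold. This is precisely the phenomenon the paper itself records after Corollary~\ref{c2.7}: when $M=M_1\oplus M_2$ with both $M_i$ simple, $AG(M)$ is a star, and a star is still a complete bipartite graph with two nonempty parts, so Corollary~\ref{c1.8} does not rule it out.

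Consequently the implication (d)~$\Rightarrow$~(a) is not merely hard to prove, it is false as stated, and no argument will close the gap without strengthening the hypotheses --- for instance by excluding the case $M\cong M_1\oplus M_2$ with $M_1,M_2$ simple (as Theorem~\ref{t3.6} does), or by assuming outright that $\gamma(AG(M))>1$, in which case (a)~$\Leftrightarrow$~(d) is immediate from Theorem~\ref{t3.3}. So your write-up correctly isolates the one non-trivial point that the paper's proof glosses over, but the repair you sketch (two vertices on each side of the bipartition) does not exist in general; if you want a correct statement you must add such a hypothesis and then note that, once $\gamma(AG(M))\geq 2$, the star case is excluded and both cited results apply.
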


\begin{proof}
Follows from Theorem \ref{t3.3} and Corollary \ref{c1.8}.
\end{proof}

In the following theorem the domination number of bipartite
annihilating-submodule graphs is given.

\begin{thm}\label{t3.5} Let $M$ be a faithful module.
If $AG(M)$ is a bipartite graph, then $\gamma((AG(M)))\leq 2$.
\end{thm}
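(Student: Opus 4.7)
The plan is to reduce the problem to Theorem \ref{t1.7}, which upgrades ``bipartite'' to ``complete bipartite'' in the setting of faithful modules, and then to exhibit an explicit dominating set of size at most two.

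First I would invoke the standing assumption that $M$ is not a vertex of $AG(M)$, so $AG(M) = AG(M)^{*}$, and recall from \cite[Theorem 3.4]{ah14} that $AG(M)^{*}$ is connected. If $AG(M)$ consists of a single vertex, there is nothing to prove, since $\gamma(AG(M)) = 1 \leq 2$. So I may assume $AG(M)$ has at least two vertices; connectedness then forces at least one edge, and since a bipartite graph with an edge has chromatic number exactly $2$, we get $\chi(AG(M)^{*}) = 2$.

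Now I would apply Theorem \ref{t1.7}: since $M$ is faithful and $\chi(AG(M)^{*}) = 2$, condition (c) gives that $AG(M) = AG(M)^{*}$ is a complete bipartite graph with two nonempty parts, say $U$ and $V$. The final step is then immediate: pick any $u \in U$ and any $v \in V$. Every vertex of $V$ is adjacent to $u$ (by completeness across the bipartition), every vertex of $U$ is adjacent to $v$, and $u,v$ themselves are either in $\{u,v\}$ or adjacent to the other. Hence $\{u,v\}$ is a dominating set, and so $\gamma(AG(M)) \leq 2$.

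There is no real obstacle here; all the hard work is packaged into Theorem \ref{t1.7}, which converts the purely combinatorial hypothesis ``bipartite'' into the much stronger structural statement ``complete bipartite.'' The only subtlety worth flagging is to dispose of the degenerate cases (a single vertex, or a bipartite graph with no edges) before citing the equivalence, since Theorem \ref{t1.7} is phrased in terms of $\chi(AG(M)^{*}) = 2$ rather than merely bipartiteness; connectedness of $AG(M)^{*}$ is what rules these cases out in one line.
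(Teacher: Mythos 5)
Your proof is correct, and it follows the same broad strategy as the paper (reduce to Theorem \ref{t1.7}), but it exits the equivalence through a different door. The paper uses the implication to condition (d): either $R$ is reduced with exactly two minimal primes, in which case it invokes Corollary \ref{c3.4} (hence Theorem \ref{t3.3} and its minimal-prime analysis) to get $\gamma(AG(M))=2$, or $AG(M)$ is a star graph, in which case $\gamma=1$. You instead use the implication to condition (c), that $AG(M)^{*}$ is complete bipartite with two nonempty parts, and then dominate with one vertex from each part. Your route is more self-contained: it needs none of the Section 3 machinery about $Min(R)$, and the final step is a one-line combinatorial observation. Your handling of the degenerate cases (a single vertex, or an edgeless bipartite graph) via connectedness of $AG(M)^{*}$ is also a point in your favor, since Theorem \ref{t1.7} is stated for $\chi(AG(M)^{*})=2$ and for bipartite graphs with \emph{two nonempty parts}, a hypothesis the paper's proof silently assumes; the only case you leave implicit is $V(AG(M))=\emptyset$, which is vacuous. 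Both arguments are valid; yours trades the paper's reliance on Corollary \ref{c3.4} for a direct construction of the dominating set.
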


\begin{proof}
Let $M$ be a faithful module. If $AG(M)$ is a bipartite graph,
then from Theorem \ref{t1.7}, either $R$ is a reduced ring with
exactly two minimal prime ideals, or $AG(M)$ is a star graph with
more than one vertex. If $R$ is a reduced ring with exactly two
minimal prime ideals, then the result follows by Corollary
\ref{c3.4}. If $AG(M)$ is a star graph with more than one vertex,
then we are done.
\end{proof}

The next theorem is on the total domination number of the
annihilating-submodule graphs of Artinian modules.

\begin{thm}\label{t3.6} Let $M=\oplus _{i=1}^n M_i$, where $M_i$ is a f.g Artinian local
module for all $1\leq i\leq n$, $n\geq 2$, and $M \neq M_1 \oplus
M_2$, where $M_1$, $M_2$ are simple modules. Then
$\gamma_t((AG(M)))= \gamma((AG(M)))= |Min(R)|$.
\end{thm}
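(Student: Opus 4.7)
The plan is to derive both $\gamma$ and $\gamma_t$ from the Section~2 calculations and then to identify the common value with $|Min(R)|$. Theorem \ref{t2.6} gives $\gamma(AG(M))=n$ and Corollary \ref{c2.7}(c) gives $\gamma_t(AG(M))=n$, provided the dominating set $\{D_1,\ldots,D_n\}$ built in the proof of Theorem~\ref{t2.6} genuinely dominates, i.e.\ at least one $n_i\geq 2$; this is precisely what the extra hypothesis $M\neq M_1\oplus M_2$ with both summands simple is there to ensure (the all-simple case is the exceptional star-graph situation of Corollary~\ref{c2.8}). So immediately $\gamma_t(AG(M))=\gamma(AG(M))=n$, and it remains to show $n=|Min(R)|$.

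For this identification, since each $M_i$ is a f.g.\ Artinian local $R$-module, $R/Ann(M_i)$ is an Artinian local ring; let $P_i$ be the preimage in $R$ of its unique maximal ideal, so $P_i$ is a maximal ideal of $R$ with $P_i=\sqrt{Ann(M_i)}$. Passing to $R/Ann(M)$ (which changes neither the graph $AG(M)$ nor the set of minimal primes of the quotient) we may assume $\bigcap_i Ann(M_i)=0$. Any prime of $R$ then contains some $Ann(M_i)$ and hence contains $P_i$, so by maximality of $P_i$ every minimal prime of $R$ coincides with some $P_i$; conversely each $P_i$ contains a minimal prime which, again by maximality, equals $P_i$. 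Thus $Min(R)=\{P_1,\ldots,P_n\}$ as a set, and the count $|Min(R)|=n$ reduces to the distinctness of these $P_i$.

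The main obstacle I foresee is precisely this distinctness: a priori two local summands $M_i$, $M_j$ could share the same associated maximal ideal. I would resolve this by invoking Proposition~\ref{p1.1}: the faithful decomposition $M=\bigoplus_i M_i$ with each $M_i$ local lifts (via the orthogonal projections in $\mathrm{End}_R(M)$, which are central since each $M_i$ is local and faithful over $R/Ann(M_i)$) to an orthogonal idempotent decomposition $R=R_1\times\cdots\times R_n$ with $R_i\cong R/Ann(\bigoplus_{j\neq i}M_j)$ an Artinian local ring. The maximal ideals of these factors correspond to pairwise distinct maximal ideals of $R$ which, tracing through, are exactly the $P_i$. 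This yields $|Min(R)|=n$ and completes the argument.
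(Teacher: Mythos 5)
Your first step --- reading off $\gamma(AG(M))=n$ from Theorem \ref{t2.6} and $\gamma_t(AG(M))=n$ from Corollary \ref{c2.7}(c) --- is a legitimate shortcut; the paper instead re-derives $\gamma_t$ via Theorem \ref{t3.2} (identifying the maximal elements of $V(AG(M))$ with $Max(M)$ by Proposition \ref{p1.5}) and gives a separate pigeonhole argument for $\gamma\geq n$. (Your parenthetical explanation of why the case of two simple summands is excluded is backwards, though: the set $\{D_1,\dots,D_n\}$ always dominates; what fails in that case is the lower bound, since $AG(M)$ is then a star and $\gamma=1$.) You also correctly identified the genuinely delicate point that the paper passes over in silence, namely why $n=|Min(R)|$.

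However, your resolution of that point contains a false step. The claim that the projections $M\to M_i$ are ``central'' (i.e.\ given by multiplication by idempotents of $R$) because each $M_i$ is local and faithful over $R/Ann(M_i)$ is not true: Proposition \ref{p1.1} only goes from idempotents of $R$ to decompositions of $M$, not conversely. Take $R=k[x]/(x^2)$ and $M=R\oplus R$. Each summand is a f.g.\ Artinian local module that is not simple, so the hypotheses of the theorem hold with $n=2$; yet $R$ is local, has no nontrivial idempotents, and $|Min(R)|=1$, so the maximal ideals $P_1=P_2=(x)$ are \emph{not} distinct and no ring decomposition $R=R_1\times R_2$ exists. (In this example the conclusion itself fails: every nonzero proper submodule $N$ satisfies $(N:M)\subseteq (x)$, hence $NK=(N:M)(K:M)M=0$ for all vertices $N,K$, so $AG(M)$ is complete and $\gamma(AG(M))=1\neq 2$.) So the distinctness of the $P_i$ cannot be proved from the stated hypotheses; it requires the additional assumption that the $M_i$ are supported at pairwise distinct maximal ideals (equivalently, that $\oplus_i M_i$ is the canonical decomposition of a f.g.\ Artinian module coming from $R/Ann(M)\cong R_1\times\dots\times R_n$). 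The paper tacitly builds this in when it asserts $Max(M)=\{N_1,\dots,N_n\}$ without proof, so your attempt founders on a gap that is really inherited from the source --- but as written, the lifting argument is the step that would fail.
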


\begin{proof}

By Proposition \ref{p1.5}, every nonzero proper submodule of $M$
is a vertex in $AG(M)$. So, the set of maximal elements of
$V(AG(M))$ and $Max(M)$ are equal. Let $M=\oplus _{i=1}^n M_i$,
where $(M_i, J_i)$ is a f.g Artinian local module for all $1\leq
i\leq n$ and $n\geq 2$. Let $Max(M) = \{N_i = M_1 \oplus \ldots
\oplus M_{i-1} \oplus J_i\oplus M_{i+1}\oplus \ldots \oplus M_n |
1 \leq i \leq n \}$. By Theorem \ref{t3.2}, $\gamma_t((AG(M)))=
 |Max(M)|$. In the sequel, we prove that
$\gamma((AG(M))) = n$. Assume to the contrary, the set $\{K_1,
\ldots , K_{n-1}\}$ is a dominating set for $AG(M)$. Since $M \neq
M_1 \oplus M_2$, where $M_1$, $M_2$ are simple modules, we find
that $K_i N_s=K_i N_t=0$, for some $i, t, s$, where $1 \leq i \leq
n-1$ and $1 \leq t, s \leq n$. This means that $K_i = 0$, a
contradiction.
\end{proof}

The following theorem provides an upper bound for the domination
number of the annihilating-submodule graph of a Noetherian module.

\begin{thm}\label{t3.7}
If R is a Notherian ring and $M$ a f.g module, then
$\gamma((AG(M)))\leq |Ass(M)|< \infty$.
\end{thm}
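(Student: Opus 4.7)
The finiteness $|Ass(M)| < \infty$ is a standard consequence of $R$ being Noetherian and $M$ finitely generated, so the substantive part of the theorem is the inequality $\gamma(AG(M)) \leq |Ass(M)|$. My plan is to exhibit an explicit dominating set of size at most $|Ass(M)|$: enumerate $Ass(M) = \{p_1, \ldots, p_n\}$, pick $x_i \in M$ with $Ann(x_i) = p_i$, and set $N_i = Rx_i$. The candidate dominating set is $D = \{N_i : 1 \leq i \leq n \text{ and } N_i \in V(AG(M))\}$.

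The argument rests on two claims. First: for every vertex $K$ of $AG(M)$ there is some $i$ with $(K:M) \subseteq p_i$. To prove this I would start from a witness $L$ (nonzero, proper) with $(K:M)(L:M)M = 0$. The case $(L:M)M = 0$ collapses to $(L:M) = Ann(M)$, which would force $M$ itself to be a vertex of $AG(M)$, contradicting the paper's standing assumption. Hence $(L:M)M \neq 0$, and any nonzero $m$ inside it realizes $(K:M)m = 0$, so $(K:M) \subseteq Z_R(M)$. Since $R$ is Noetherian and $M$ is finitely generated, $Z_R(M) = \bigcup_{i=1}^n p_i$, and prime avoidance gives the desired index $i$.

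Second: for any such $i$, $N_i$ is a vertex and $KN_i = 0$. From $(K:M) \subseteq p_i = Ann(x_i)$ one has $(K:M)x_i = 0$; unpacking $(N_i : M) = \{r \in R : rM \subseteq Rx_i\}$ then yields $(K:M)(N_i : M)M = 0$, which is $KN_i = 0$ by definition of the product. This simultaneously certifies $N_i$ as a vertex (with witness $K$) and produces the adjacency $K - N_i$, or the equality $K = N_i$. Combining the two claims, every vertex of $AG(M)$ either lies in $D$ or is adjacent to some element of $D$, so $|D| \leq n$ is the desired bound. Note that indices $i$ for which no vertex satisfies $(K:M) \subseteq p_i$ are harmlessly dropped from $D$, so the eventual $N_i$ need not cover those $p_i$.

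The step I expect to be the main obstacle is $(K:M) \subseteq Z_R(M)$: this is the only place where the paper's standing assumption that $M$ is not a vertex is essential, and without it $(L:M)$ could coincide with $Ann(M)$ and leave $(K:M)$ entirely unconstrained. Everything else reduces to prime avoidance plus a short manipulation using the product rule $NK = (N:M)(K:M)M$.
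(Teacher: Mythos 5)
Your proposal is correct and follows essentially the same route as the paper's proof: the dominating set $\{Rm_i\}$ indexed by $Ass(M)=\{p_1,\ldots,p_n\}$ with $p_i=ann(m_i)$, the containment $(K:M)\subseteq Z(M)=\cup_{i=1}^n p_i$, prime avoidance, and the computation $K(Rm_i)=0$. The only (harmless) differences are that you spell out why $(K:M)\subseteq Z(M)$ using the standing assumption that $M$ is not a vertex, where the paper cites Sharp, and that you certify each $Rm_i$ as a vertex via the dominated $K$ rather than via $p_iM$.
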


\begin{proof}
 By \cite{s}, Since R is a Notherian ring and $M$ a f.g module, $|Ass(M)|< \infty$.
  Let $Ass(M) = \{p_1, . . . , p_n\}$ where
$p_i = ann(m_i)$ for some $m_i \in M$ for every $i = 1, \ldots ,
n$. Set $A = \{Rm_i | 1 \leq i \leq n \}$. We show that $A$ is a
dominating set of $AG(M)$. Clearly, every $Rm_i$ is a vertex of
$AG(M)$, for $i = 1, \ldots , n$ $( (p_iM)(m_iR)=0)$. If $K$ is a
vertex of $AG(M)$, then \cite[Corollary 9.36]{s} implies that
$(K:M)\subseteq Z(M) = \cup_{i=1} ^n p_i$. It follows from the
Prime Avoidance Theorem that $(K:M) \subseteq p_i$, for some $i$,
$1 \leq i \leq n$. Thus $K(Rm_i) = 0$, as desired.
\end{proof}

The remaining result of this paper provides the domination number
of the annihilating-submodule graph of a finite direct product of
modules.

\begin{thm}\label{c3.8} For a module $M$, which is a product of two
$($nonzero$)$
modules, one of the following holds:

\begin{itemize}
\item [(a)] If $M \cong F \times D$, where $F$ is a simple module
and $D$ is a prime module, then $\gamma(AG(M))=1$. \item [(b)] If
$M \cong D_1 \times D_2$, where $D_1$ and $D_2$ are prime modules
which are not simple, then $\gamma(AG(M))=2$.

\item [(c)] If $M \cong M_1 \times D$, where $M_1$ is a module
which is not prime and $D$ is a prime module, then $\gamma(AG(M))
= \gamma(AG(M_1)) + 1$.

\item [(d)]  If $M \cong M_1 \times M_2$, where $M_1$ and $M_2$
are two modules which are not prime, then $\gamma(AG(M)) =
\gamma(AG(M_1)) + \gamma(AG(M_2))$.

 \end{itemize}
\end{thm}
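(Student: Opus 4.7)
I would handle the four cases separately. Part (a) is an immediate application of Remark \ref{r2.1}: the condition $M\cong F\times D$ with $F$ simple and $D$ prime is precisely the structural criterion there for the existence of a vertex of $AG(M)$ adjacent to every other vertex, so $\gamma(AG(M))=1$.

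For part (b), the upper bound comes from exhibiting $S=\{D_1\times 0,\ 0\times D_2\}$ as a dominating set. Both are vertices (they annihilate each other). For any other vertex $V=N_1\times N_2$, Proposition \ref{p1.1}(d) gives $V\cdot(D_1\times 0)=(N_1\cdot D_1)\times 0=((N_1:D_1)D_1)\times 0$ and symmetrically for $0\times D_2$. Since $V$ is a vertex, there is a nonzero proper $L=L_1\times L_2$ with $(N_i:D_i)(L_i:D_i)\subseteq\mathrm{Ann}(D_i)$ in each coordinate; primeness of $\mathrm{Ann}(D_i)$ forces one of the two factors to equal $\mathrm{Ann}(D_i)$. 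A short case analysis then shows either $(N_1:D_1)=\mathrm{Ann}(D_1)$ or $(N_2:D_2)=\mathrm{Ann}(D_2)$, giving adjacency of $V$ with one element of $S$; the residual case where both coordinates of $L$ satisfy $(L_i:D_i)=\mathrm{Ann}(D_i)$ would make $(L:M)=\mathrm{Ann}(M)$ and contradict the standing hypothesis that $M$ is not a vertex. For the lower bound $\gamma\geq 2$, suppose a single vertex $V=N_1\times N_2$ dominates all vertices: then $V$ is adjacent to both $D_1\times 0$ and $0\times D_2$, which forces $(N_i:D_i)=\mathrm{Ann}(D_i)$ for both $i$, so $(V:M)=\mathrm{Ann}(M)$, making $M$ a vertex --- contradiction.

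For parts (c) and (d) I would proceed with the same template. Let $T$, $T_1$, $T_2$ denote $\gamma$-sets of $AG(M_1)$, and of $AG(M_2)$ in (d). In (c), propose $S=\{t\times 0\colon t\in T\}\cup\{0\times D\}$ of size $\gamma(AG(M_1))+1$; in (d), propose $S=\{t\times 0\colon t\in T_1\}\cup\{0\times t\colon t\in T_2\}$ of size $\gamma(AG(M_1))+\gamma(AG(M_2))$. Verification splits the vertices of $AG(M)$ into axis vertices $N\times 0$ or $0\times K$ (dominated by the corresponding lifted $\gamma$-sets, or by the axis element $0\times D$ in (c) when $N$ is not itself a vertex of $AG(M_1)$) and diagonal vertices $N\times K$ with both coordinates nonzero, handled by the prime-module dichotomy from (b) --- in (c) via primeness of $D$, in (d) by arguing that a diagonal vertex of $AG(M)$ forces one coordinate to be a vertex of the relevant $AG(M_i)$. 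For the lower bounds, projecting a dominating set of $AG(M)$ onto each factor yields a dominating set of the corresponding $AG(M_i)$, and in (c) at least one extra element is required to cover those vertices not dominated by any lift of a $\gamma$-set of $AG(M_1)$.

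The main obstacle will be the diagonal-vertex analysis in the upper bounds of (c) and (d); in particular, showing that when $N$ is not a vertex of $AG(M_1)$ and $K$ satisfies $(K:D)\neq\mathrm{Ann}(D)$, the diagonal $N\times K$ is still dominated by some element of $S$ (via the reduction coming from the witness $L$ for $V$'s vertex status). Equally delicate is verifying the lower bounds: one must check that the projections of an arbitrary dominating set really form dominating sets of the factor graphs, and that the counts add up precisely rather than bound from only one side. Both points hinge on the standing hypothesis that $M$ is not a vertex of $AG(M)$, which is used repeatedly to rule out the degenerate case $(V:M)=\mathrm{Ann}(M)$.
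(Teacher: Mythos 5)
Your route is the same as the paper's: for (c) and (d) you propose exactly the dominating sets the paper uses, namely $\{K_1\times 0,\dots,K_n\times 0,\,0\times D\}$ and the union of the two lifted $\gamma$-sets, and your (a) and (b) spell out what the paper dismisses as trivial. One concrete slip in (b): your lower bound asserts that a single dominating vertex $V$ must be \emph{adjacent} to both $D_1\times 0$ and $0\times D_2$, which fails when $V$ is itself one of these two vertices; as written the argument never invokes the hypothesis that $D_1,D_2$ are not simple, so it would equally ``prove'' $\gamma\geq 2$ in case (a). The repair is exactly where non-simplicity enters: if, say, $V=D_1\times 0$, pick a nonzero proper $N_1<D_1$; then $N_1\times 0$ is a vertex and $N_1D_1=(N_1:D_1)D_1\neq 0$, because the standing hypothesis that $M$ is not a vertex of $AG(M)$ forces $(N_1:D_1)\neq Ann(D_1)$.

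The ``main obstacle'' you flag in (c) and (d) is genuine, and the paper does not resolve it either --- its proof of these parts is literally ``it is not hard to see.'' Concretely, if some $K_i$ in the chosen minimal dominating set of $AG(M_1)$ satisfies $K_i^2\neq 0$ and $K_iK_j\neq 0$ for all $j$, then the diagonal vertex $K_i\times K$ (with $0\neq K\leq D$) is adjacent to nothing in your $S$ and does not lie in $S$. This already happens for $R=M=\mathbb{Z}_8\times \mathbb{Z}$: the lifted set $\{2\mathbb{Z}_8\times 0,\ 0\times \mathbb{Z}\}$ fails to dominate the vertex $2\mathbb{Z}_8\times 2\mathbb{Z}$, whereas $\{4\mathbb{Z}_8\times 0,\ 0\times \mathbb{Z}\}$ works. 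So the upper bound needs either a careful choice of the $\gamma$-set of $AG(M_1)$ (one each of whose elements squares to zero or has a neighbor inside the set) or a separate argument for such diagonal vertices; ``lift an arbitrary $\gamma$-set'' does not suffice as stated, and you should not expect the paper's proof to supply the missing step.
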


\begin{proof}
 Parts $(a)$ and $(b)$ are trivial.

 $(c)$ With no loss of generality, one can assume that $\gamma(AG(M_1)) < \infty$. Suppose that
$\gamma(AG(M_1)) =n$ and $\{K_1, \ldots , K_n \}$ is a minimal
dominating set of $AG(M_1)$. It is not hard to see that $\{K_1
\times 0, \ldots , K_n \times 0, 0\timesD\}$ is the smallest
dominating set of $AG(M)$.

$(d)$ We may assume that $\gamma(AG(M_1)) =m$ and $\gamma(AG(M_2))
=n$, for some positive integers $m$ and $n$. Let $\{K_1, \ldots ,
K_m\}$ and $\{L_1, \ldots , L_n\}$ be two minimal dominating sets
in $AG(M_1)$ and $AG(M_2)$, respectively. It is easy to see that
$\{K_1 \times 0, \ldots , K_m \times 0, 0 \times L_1 \ldots 0
\times L_n\}$ is the smallest dominating set in $AG(M)$.
\end{proof}

\end{document}